\newcounter{minutes}\setcounter{minutes}{\time}
\newcounter{hours}\setcounter{hours}{\time}
\date{}
\newfont{\cyrilic}{wncyr10 scaled 1000}
\title[On two new means of two arguments III]
{On two new means of two arguments III}
\author[B.A Bhayo]{Barkat Ali Bhayo}
\address{Department of Mathematics, Sukkur IBA University, Airport road Sukkur, Pakistan}
\email{bhayo.barkat@gmail.com}
\author[J. S\'andor]{J\'ozsef S\'andor}
\address{Babe\c{s}-Bolyai University
Department of Mathematics
Str. Kogalniceanu nr. 1
400084 Cluj-Napoca, Romania}
\email{jsandor@math.ubbcluj.ro}
\newcommand{\comment}[1]{}
\theoremstyle{plain}
\newtheorem{theorem}[equation]{Theorem}
\newtheorem{theorem a}[equation]{Theorem A}
\newtheorem{theorem b}[equation]{Theorem B}
\newtheorem{lemma}[equation]{Lemma}
\newtheorem{corollary}[equation]{Corollary}
\newtheorem{remark}[equation]{Remark}
\numberwithin{equation}{section}
\begin{document}


%
%
\def\thefootnote{}
\footnotetext{ \texttt{\tiny File:~\jobname .tex,
          printed: \number\year-\number\month-\number\day,
          \thehours.\ifnum\theminutes<10{0}\fi\theminutes}
} \makeatletter\def\thefootnote{\@arabic\c@footnote}\makeatother


\begin{abstract}
In this paper authors establish the two sided inequalities for the 
following two new means
$$X=X(a,b)=Ae^{G/P-1},\quad  Y=Y(a,b)=Ge^{L/A-1}.$$
As well as many other well known inequalities involving the identric mean $I$ and the logarithmic mean are refined from the literature as an application.
\end{abstract}

\maketitle

\bigskip
{\bf 2010 Mathematics Subject Classification}: 26D05, 26D15, 26D99.

{\bf Keywords}: Inequalities, means of two arguments, identric mean, logarithmic mean, $X$ and $Y$ means.


\section{\bf Introduction}
The study of the inequalities involving the classical means
such as arithmetic mean $A$, geometric mean $G$, identric mean $I$ and logarithmic mean $L$  have been of the extensive interest for several authors, e.g., see \cite{alzer1, alzer2,carlson,hasto,ns1004, ns1004a,sandorc,sandord,sandore,vvu}.

In 2011, S\'andor \cite{sandora}  introduced a new mean
$X(a,b)$ for two positive real numbers $a$ and $b$,
defined by
$$X=X(a,b)=Ae^{G/P-1},$$
where
$A=A(a,b)=(a+b)/2,\, G=G(a,b)=\sqrt{ab},$ and
$$P=P(a,b)=\frac{a-b}{2\displaystyle\arcsin\left(\frac{a-b}{a+b}\right)},\quad a\neq b,$$
are the arithmetic mean, geometric mean, and Seiffert mean 
\cite{seiff}, respectively. This paper contains essentially results on the $X$ mean, where several inequalities involving the $X$ mean and the refinement of \eqref{89ineq}and \eqref{89inequ} are established.

In the same paper, S\'andor introduced an other mean $Y(a,b)$ 
for two positive real $a$ and $b$, which is defined by
$$Y=Y(a,b)=Ge^{L/A-1},$$
where
$$L=L(a,b)=\frac{a-b}{\log(a)-\log(y)},\quad a\neq b,$$
is a logarithmic mean. For two positive real numbers $a$ and $b$, the identric mean and harmonic mean are defined by
$$I=I(a,b)=\frac{1}{e}\left(\frac{a^a}{b^b}\right)^{1/(a-b)},\quad a\neq b,$$
and 
$$H=H(a,b)=2ab/(a+b),$$
respectively.
In 2012, the $X$ mean appeared in \cite{sandortwosharp}. In 2014, $X$ and $Y$ means published in the journal of Notes in Number Theory and. Discrete Mathematics \cite{sandornew}.
For the historical background and the generalization of these means we refer the reader to see, e.g, \cite{alzer2,carlson,mit,ns1004,ns1004a,sandor611,sandorc,sandord,sandore,sandor1405,sandorF,vvu}. 
Connections of these means with the trigonometric or hyperbolic inequalities can be found in \cite{barsan,sandora,sandornew,sandore}.

In \cite{sandornew}, S\'andor  proved inequalities of $X$ and $Y$ means in terms of other classical means as well as their relations with each other. Some of the inequalities are recalled for the easy reference. 

\begin{theorem}\label{sandortheorem}\cite{sandornew} For $a,b>0$ with $a\neq b$, one has
\begin{enumerate}
\item $\displaystyle G<\frac{AG}{P}<X<\frac{AP}{2P-G}<P$,\\
\item $\displaystyle H<\frac{LG}{A}<Y<\frac{AG}{2A-L}<G$,\\
\item $\displaystyle 1<\frac{L^2}{IG}<\frac{L\cdot e^{G/L-1}}{G}<\frac{PX}{AG}$,\\
\item $\displaystyle H<\frac{G^2}{I}<\frac{LG}{A}<\frac{G(A+L)}{3A-L}<Y$.\\
\end{enumerate}
\end{theorem}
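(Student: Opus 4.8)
The plan is to exploit homogeneity and reduce every one of the twelve inequalities to a one-variable problem. Writing $a = 1+x$ and $b = 1-x$ with $x \in (0,1)$ normalizes $A = 1$ and gives the explicit forms $G = \sqrt{1-x^2}$, $P = x/\arcsin x$, $L = x/\operatorname{artanh} x$, and $H = 1 - x^2 = G^2$, while $I$ keeps its closed form; note in particular the general relation $H = G^2/A$. Throughout I would take as known the classical orderings $G < L < P < A$ and $G < L < I < A$ together with the Seiffert estimate $P > (A+G)/2$.

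For parts (1) and (2) I would isolate two elementary facts about the exponential. With $u = G/P$ (resp. $v = L/A$), the middle inequalities $\tfrac{AG}{P} < X$ and $\tfrac{LG}{A} < Y$ are exactly the tangent-line bound $e^{t-1} > t$ for $t \neq 1$, while the upper bounds $X < \tfrac{AP}{2P-G}$ and $Y < \tfrac{AG}{2A-L}$ both amount to $(2-t)e^{t-1} < 1$; the latter follows since $f(t) = (2-t)e^{t-1}$ has $f'(t) = (1-t)e^{t-1} > 0$ on $(-\infty,1)$ and $f(1)=1$. The outer inequalities then clear denominators into classical statements: $G < \tfrac{AG}{P}$ and $\tfrac{AP}{2P-G} < P$ become $P < A$ and $P > (A+G)/2$, and $H < \tfrac{LG}{A}$ and $\tfrac{AG}{2A-L} < G$ become $G < L$ and $L < A$.

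Part (4) is handled in the same spirit: $H < G^2/I$ is just $I < A$; the third inequality $\tfrac{LG}{A} < \tfrac{G(A+L)}{3A-L}$ collapses after cross-multiplication to $(L-A)^2 > 0$; and the fourth $\tfrac{G(A+L)}{3A-L} < Y$ reduces (with $v=L/A$) to $h(v) := (3-v)e^{v-1} - (1+v) > 0$, which I would prove from $h(1)=h'(1)=0$ and $h''(v) = (1-v)e^{v-1} > 0$ on $(0,1)$. For part (3) the decisive simplification is to introduce $\phi(m) = m\,e^{G/m-1}$, so that $\tfrac{Le^{G/L-1}}{G} = \phi(L)/G$ and $\tfrac{PX}{AG} = \phi(P)/G$; the last inequality is then $\phi(L) < \phi(P)$, which holds because $\log\phi$ has derivative $(m-G)/m^2 > 0$ for $m > G$ and $G < L < P$.

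The genuine content — and the step I \emph{expect to be the main obstacle} — lies in three ``second-order'' product inequalities that survive these reductions: $L^2 > GI$ (part (3), first), $L/I < e^{G/L-1}$ (part (3), second, equivalent to $\tfrac{L^2}{IG} < \phi(L)/G$), and $LI > GA$ (part (4), second). Each is sharp to second order in $x$: the associated logarithmic differences $2\log L - \log G - \log I$, $\log I - \log L + G/L - 1$, and $\log L + \log I - \log G$ all vanish through order $x^2$ and turn positive only at order $x^4$, with leading coefficients $\tfrac{1}{90}$, $\tfrac{1}{360}$, and $\tfrac{1}{18}$ respectively. Consequently none of them follows from the crude mean orderings or from the tangent-line bound alone, and I would establish them by passing to the variable $x$ (or $s=\log(a/b)$), writing each difference as an explicit analytic function, and showing its power series has nonnegative coefficients, or equivalently by applying the monotone form of l'Hôpital's rule to the relevant ratios; this is where the real work concentrates.
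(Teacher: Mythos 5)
First, a point of comparison: the paper does not prove this theorem at all --- it is quoted from \cite{sandornew} as background, ``recalled for easy reference'' --- so your proposal can only be judged on its own merits and against the toolkit the paper assembles elsewhere (notably Lemma \ref{lemma1}). On that basis, nine of the twelve inequalities are handled correctly and completely by your reductions: the tangent-line bound $e^{t-1}>t$, the monotonicity of $(2-t)e^{t-1}$ on $(0,1)$, the clearings to $P<A$, $P>(A+G)/2$, $G<L<A$, $I<A$ and $(A-L)^2>0$, the convexity argument for $h(v)=(3-v)e^{v-1}-(1+v)$, and the monotonicity of $\phi(m)=m\,e^{G/m-1}$ for $m>G$ all check out, and I verified your leading coefficients $1/90$, $1/360$, $1/18$.

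The gap sits exactly where you predicted it, but it is a real gap: for $L^2>GI$, $L/I<e^{G/L-1}$ and $LI>GA$ --- which carry essentially all the content of parts (3) and (4) --- you give a method statement, not a proof. Two concrete remarks. First, one of the three is in fact soft, and your claim that none of them yields to the tangent-line bound is wrong for it: by the identity $\log(I/G)=A/L-1$ of Lemma \ref{lemma1}, the inequality $LI>GA$ rearranges to $\log I-\log G>\log A-\log L$, i.e.\ to $A/L-1>\log(A/L)$, which is the same tangent-line bound applied at $t=A/L>1$ (note this bound is fourth-order sharp here because $A/L-1=O(x^2)$, consistent with your coefficient $1/18$). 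Second, for the two genuinely delicate ones, the mechanism ``show the power series has nonnegative coefficients'' does not apply verbatim: the relevant hyperbolic expansions have Bernoulli coefficients of alternating sign, so one must first clear denominators and pass to entire functions, and that step is the actual proof. For instance, $L^2>GI$ is equivalent via Lemma \ref{lemma1} to $F(y)=2\log(\sinh y/y)-y\coth y+1>0$ for $y>0$; here $F(0^+)=0$ and
\begin{equation*}
F'(y)=\coth y-\frac{2}{y}+\frac{y}{\sinh^2 y}
=\frac{y\sinh y\cosh y+y^2-2\sinh^2 y}{y\sinh^2 y},
\end{equation*}
and with $u=2y$ the numerator equals $\tfrac14\left(u\sinh u+u^2-4\cosh u+4\right)=\tfrac14\sum_{k\geq 3}\frac{2k-4}{(2k)!}\,u^{2k}>0$, which closes that case; the analogous (and slightly messier) computation is still owed for $L/I<e^{G/L-1}$, which is equivalent to $\log(\sinh y/y)<y\coth(y/2)-2$. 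Until those computations are supplied, parts (3) and (4) are not proved. (Alternatively, $L^2>GI$ is Alzer's inequality \eqref{ineq6}, which this paper itself disposes of by citation and records as equivalent to \eqref{ineq1003}; citing it would also be legitimate, but your proposal explicitly undertakes to prove it.)
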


In \cite{barsan}, a series expansion of $X$ and $Y$ was given and proved the following inequalities.
\begin{theorem}\label{sandorbhayo-theorem}\cite{barsan} For $a,b>0$ with $a\neq b$, one has
\begin{enumerate}
\item $\displaystyle \frac{1}{e}(G+H)< Y<\frac{1}{2}(G+H),$\\
\item $G^2I<IY<IG<L^2,$\\
\item $\displaystyle\frac{G-Y}{A-L} < \frac{Y+G}{2A}<  \frac{3G+H}{4A}  <1,$\\
\item $\displaystyle L<\frac{2G+A}{3}<X<L(X,A)<P<\frac{2A+G}{3}<I,$\\
\item $\displaystyle 2\left(1- \frac{A}{P}\right)<\log \left(\frac{X}{A}\right) <
 \left(\frac{P}{A}\right)^2.$
\end{enumerate}
\end{theorem}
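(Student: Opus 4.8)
The plan is to exploit homogeneity. Since each of $A,G,H,L,I,P,X,Y$ is homogeneous of degree one, every inequality in the statement is scale-invariant, so I may normalise $a>b$ by the substitution $a=e^{t},\ b=e^{-t}$ with $t>0$. This gives $G=1$ and rewrites the classical means as elementary transcendental functions of $t$: $A=\cosh t$, $H=1/\cosh t$, $L=\sinh t/t$, $I=e^{t\coth t-1}$, and $P=\sinh t/\arcsin(\tanh t)$ (the denominator being the Gudermannian of $t$). The defining formulas then give $Y=e^{\tanh t/t-1}$ and $X=\cosh t\,e^{\arcsin(\tanh t)/\sinh t-1}$. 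Two identities will be used throughout: directly from the definitions, $\log(Y/G)=L/A-1$ and $\log(X/A)=G/P-1$; in particular the middle member of part~(5) is \emph{exactly} $G/P-1$, requiring no estimation.

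Before estimating anything I would record the internal reductions that collapse most of the statement onto a few genuinely new inequalities. Cross-multiplying the three links of part~(3) shows them equivalent, respectively, to $G(A+L)/(3A-L)<Y$ (which is Theorem~\ref{sandortheorem}(4)), to $Y<(G+H)/2$ (the right half of part~(1)), and to the trivial $3G+H<4A$ (immediate from $H<G<A$). In part~(2) the chain $IY<IG<L^{2}$ is just $Y<G$ (Theorem~\ref{sandortheorem}(2)) combined with $IG<L^{2}$ (Theorem~\ref{sandortheorem}(3)), while the remaining bound $G^{2}<IY$ is, after normalisation, the statement $t\coth t+\tanh t/t>2$, which follows at once from the arithmetic--geometric mean inequality since the product of the two summands is $1$. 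Finally the left inequality of part~(5) rearranges to $3P<2A+G$, i.e. $P<(2A+G)/3$, which is one of the links of part~(4). Thus the independent content is: the two bounds of part~(1), the chain of part~(4), and the right inequality $G/P-1<(P/A)^{2}$ of part~(5).

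Each of these is now a one-variable inequality on $(0,\infty)$, and since every function occurring is even in $t$ I would settle them by Maclaurin expansion in $t^{2}$ together with the monotone form of l'H\^{o}pital's rule. Several links of part~(4) come cheaply: $G<P$ forces $G/P-1<0$, hence $X=Ae^{G/P-1}<A$, and for $X<A$ the logarithmic mean obeys $X=\min(X,A)<L(X,A)$, giving $X<L(X,A)$ for free; the end links $L<(2G+A)/3$ and $(2A+G)/3<I$ are classical-type identric/logarithmic estimates (also reachable by the series method). The bounds $(2G+A)/3<X$ and $P<(2A+G)/3$, and the part~(5) inequality $G/P-1<(P/A)^{2}$, then follow by comparing the first surviving coefficients in the $t^{2}$-expansions of the relevant differences; after the substitution the two bounds of part~(1) read $1+1/\cosh t<e^{\tanh t/t}$ and $2e^{\tanh t/t}<e\,(1+1/\cosh t)$, both amenable to the same machinery.

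The hard part will be the link $L(X,A)<P$ of part~(4), where a logarithmic mean is taken of the already transcendental quantity $X$, so a direct series attack on $L(X,A)$ is unpleasant. Here I would not expand $L(X,A)$ but instead use $\log A-\log X=1-G/P$ from the definition of $X$: setting $v:=1-G/P>0$ one has $X=Ae^{-v}$ and therefore $L(X,A)=(A-X)/(\log A-\log X)=A(1-e^{-v})/v$, so that $L(X,A)<P$ becomes the single-variable statement $A(1-e^{-v})/v<P$ with $v=1-\arcsin(\tanh t)/\sinh t$. The real difficulty is that this still couples the Gudermannian (through $v$ and $P$) to $\cosh t$ (through $A$), so that neither a purely trigonometric nor a purely hyperbolic normalisation diagonalises it; I expect to need a careful monotonicity analysis of $t\mapsto P-A(1-e^{-v})/v$, controlling its sign via its derivative together with the limits as $t\to0^{+}$ and $t\to\infty$, rather than a one-line comparison.
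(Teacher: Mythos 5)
Your reductions are correct and are the strongest part of the proposal: cross-multiplication does collapse part (3) onto Theorem \ref{sandortheorem}(4), onto $Y<(G+H)/2$, and onto the trivial $3G+H<4A$; part (2) does reduce to $Y<G$ and $IG<L^{2}$ (both in Theorem \ref{sandortheorem}) plus $G^{2}<IY$, for which your AM--GM argument (the exponents $t\coth t$ and $\tanh t/t$ have product $1$) is valid; the left half of (5) is indeed $P<(2A+G)/3$; and $X<L(X,A)$ does come for free from $X<A$. The genuine gap is that the analytic core left over after these reductions is never actually proved. For $L(X,A)<P$ you explicitly defer the argument (``I expect to need a careful monotonicity analysis''), which is an acknowledgment of a missing proof, not a proof; and for $(2G+A)/3<X$, $P<(2A+G)/3$, the right half of (5), and both halves of (1), ``comparing the first surviving coefficients in the $t^{2}$-expansions of the relevant differences'' establishes each inequality only in a neighbourhood of $t=0$, whereas the claims are on all of $(0,\infty)$. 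To globalize one needs either term-by-term domination of complete series (in the spirit of \eqref{xcot}--\eqref{xsin}) or a monotonicity argument in the style of Lemmas \ref{lembk} and \ref{lem0} with verified endpoint limits, and none of this is carried out. As submitted, roughly half of the theorem, including its hardest link, rests on an unexecuted plan.

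Your diagnosis of why $L(X,A)<P$ is hard is also mistaken, and correcting it removes the obstruction you describe. That link involves only $A$, $X$ and $P$, all of which are elementary in the trigonometric parametrization of Lemma \ref{lemma1} ($P/A=\sin x/x$, $G/A=\cos x$, $X/A=e^{x\cot x-1}$, $x\in(0,\pi/2)$); since $\log(A/X)=1-x\cot x>0$ there, the inequality becomes the clean one-variable statement
$$\frac{1-e^{x\cot x-1}}{1-x\cot x}<\frac{\sin x}{x},\qquad x\in\left(0,\frac{\pi}{2}\right),$$
with no Gudermannian coupling at all: the coupling you ran into is an artifact of insisting on the hyperbolic variable $t$ for a link in which $L$, $I$, $Y$ never appear. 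For comparison, the paper itself gives no proof of Theorem \ref{sandorbhayo-theorem} (it is quoted from \cite{barsan}), but its Section 3 machinery is exactly the toolkit your sketch is missing; for instance Lemma \ref{lemma2} is precisely the inequality $(2G+A)/3<X$ in trigonometric form, proved by the monotone l'H\^opital method, and that is the pattern your remaining links would need to follow.
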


For $p\in \mathbb{R}$ and $a,b>$ with $a\neq b$, the $p$th power mean 
$M_p(a,b)$ and $p$th power-type Heronian mean $H_p$(a,b) are define by 

$$
M_p=M_p(a,b)=
\begin{cases}
\displaystyle\left(\frac{a^p+b^p}{2}\right)^{1/p}, & p\neq 0,\\
\sqrt{ab}, & p=0,
\end{cases}
$$
and 
$$
H_p=H_p(a,b)=
\begin{cases}
\displaystyle\left(\frac{a^p+(ab)^{p/2}+b^p}{3}\right)^{1/p}, & p\neq 0,\\
\sqrt{ab}, & p=0,
\end{cases}
$$
respectively.

In \cite{chuet}, Chu et al. proved that the following double inequality 
\begin{equation}\label{89ineq}
M_p<X<M_q
\end{equation}
holds for all $a,b>0$ with $a\neq b$ if and only if $p\leq 1/3$ and
$q\geq \log (2)/(1+\log(2))\approx 0.4093.$

Recently, Zhou et al. \cite{zhouet} proved that for all $a,b>0$ with $a\neq b$,
the following double inequality
\begin{equation}\label{89inequ}
H_\alpha < X < H_\beta
\end{equation}
holds if and only if $\alpha \leq 1/2$ and $\beta\geq \log(3)/(1+\log(2))
\approx 0.6488$.

This paper is organized as follows: In Section 1, we give the introduction. Section 2 consists of main results and remarks. In Section 3, some connections of $X$, $Y$ and other means are given with trigonometric and hyperbolic functions. Some lemmas are also proved in this section which will be used in the proof of main result. Section 4 deals with the proof of the main result and corollaries.

\section{\bf Main result and motivation}

Making contribution to the topic, authors refine some previous results appeared in the literature \cite{alzer0, alzer1, barsan, chuet, zhouet,sandornew} as well as establish new results involving the $X$ mean. 

\begin{theorem}\label{thm1} For $a,b>0$, we have
\begin{equation}\label{30ineqa}
\alpha G+(1-\alpha)A<X<\beta G+(1-\beta)A,
\end{equation}
with best possible constants $\alpha=2/3\approx 0.6667$ and $\beta =(e-1)/e\approx 0.6321$,
and
\begin{equation}\label{30ineqb}
A+G-\alpha_1 P<X<A+G-\beta_1 P,
\end{equation}
with best possible constants $\alpha_1=1$ and $\beta_1 =\pi(e-1)/(2e)\approx 0.9929$.
\end{theorem}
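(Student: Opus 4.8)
The plan is to exploit the homogeneity of all the means and reduce each two-sided inequality to a one-variable monotonicity statement. Assuming $a>b$ without loss of generality (the case $a=b$ being the degenerate equality case), I set $\frac{a-b}{a+b}=\sin s$ with $s\in(0,\pi/2)$. A direct computation then gives the normalized ratios
$$\frac{G}{A}=\cos s,\qquad \frac{P}{A}=\frac{\sin s}{s},\qquad \frac{X}{A}=e^{\,s\cot s-1},$$
so that every quantity in \eqref{30ineqa} and \eqref{30ineqb} becomes an explicit elementary function of $s$.

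Next I would recast the inequalities as statements about auxiliary quotients. Since $A>G$, inequality \eqref{30ineqa} with the stated best constants is equivalent to saying that
$$\varphi(s)=\frac{A-X}{A-G}=\frac{1-e^{\,s\cot s-1}}{1-\cos s}$$
satisfies $\sup\varphi=2/3$ and $\inf\varphi=(e-1)/e$, both approached but not attained. Likewise, \eqref{30ineqb} is equivalent to the claim that
$$\psi(s)=\frac{A+G-X}{P}=\frac{s\bigl(1+\cos s-e^{\,s\cot s-1}\bigr)}{\sin s}$$
has $\sup\psi=1$ and $\inf\psi=\pi(e-1)/(2e)$. A Taylor expansion of $s\cot s$ at the endpoints confirms the boundary values: $\varphi(0^+)=2/3$, $\varphi(\pi/2^-)=(e-1)/e$, $\psi(0^+)=1$, and $\psi(\pi/2^-)=\pi(e-1)/(2e)$. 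Everything thus reduces to proving that $\varphi$ and $\psi$ are strictly decreasing on $(0,\pi/2)$, which simultaneously yields strictness and the sharpness of all four constants.

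For the monotonicity of $\varphi$ I would use the monotone form of l'Hôpital's rule: writing $\varphi=N/D$ with $N=1-e^{\,s\cot s-1}$ and $D=1-\cos s$ (both vanishing at $0$), it suffices to show that $Q=N'/D'$ is decreasing. Logarithmic differentiation of $Q=e^{\,s\cot s-1}(s-\sin s\cos s)/\sin^3 s$ reduces the sign of $Q'$, after clearing positive factors, to the single inequality
$$\chi(s):=s^2+s\sin s\cos s-2\sin^2 s>0,\qquad s\in(0,\pi/2).$$
Rewriting $\chi(s)=s^2-1+\cos 2s+\tfrac{s}{2}\sin 2s$, one checks $\chi(0)=\chi'(0)=\chi''(0)=0$ and $\chi'''(s)=2(\sin 2s-2s\cos 2s)>0$, so three successive integrations give $\chi>0$. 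Hence $\varphi$ is strictly decreasing, settling \eqref{30ineqa}.

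The main obstacle is \eqref{30ineqb}: here $\psi$ is the product of the increasing factor $s/\sin s$ and the factor $1+\cos s-e^{\,s\cot s-1}$, whose monotonicity is not transparent, so the product is not obviously monotone. I expect to treat $\psi$ by the same l'Hôpital-monotonicity scheme applied to $M/\sin s$ with $M=s(1+\cos s)-s\,e^{\,s\cot s-1}$, and then to reduce the sign of the resulting derivative to another polynomial--trigonometric inequality handled by repeated differentiation, analogous to $\chi>0$ but heavier because of the surviving exponential factor. Controlling this exponential term uniformly on $(0,\pi/2)$ is the crux of the argument; the endpoint computations already carried out then pin down $\alpha_1=1$ and $\beta_1=\pi(e-1)/(2e)$ as best possible.
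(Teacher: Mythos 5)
Your treatment of \eqref{30ineqa} is correct and is, in substance, the paper's own argument: your $\varphi$ is exactly the function $f$ of Lemma \ref{lemma2}, you apply the same monotone l'H\^opital rule (Lemma \ref{lem0}) to the same quotient, and your key inequality $\chi(s)=s^2+s\sin s\cos s-2\sin^2 s>0$ is precisely the paper's condition $c(x)=x\left(\cot x+x/\sin^2 x\right)>2$ after clearing the denominator $\sin^2 x$. The only difference is how this inequality is proved: the paper deduces it from the Cusa--Huygens inequality (reducing it to $(\cos x-1)^2>0$), while you use three successive differentiations; both are valid, and your endpoint limits and the equivalence between monotonicity of $\varphi$ and sharpness of $2/3$ and $(e-1)/e$ are handled correctly.

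The genuine gap is \eqref{30ineqb}. You correctly reduce it to showing that $\psi=(A+G-X)/P$ has range $\left(\pi(e-1)/(2e),\,1\right)$, and you compute the endpoint values, but you never prove the monotonicity (or even the range) of $\psi$: you only say you ``expect'' the l'H\^opital scheme applied to $M/\sin s$ to work and you identify the surviving exponential factor as the crux without resolving it. As written, the second half of the theorem --- including the sharpness of $\alpha_1=1$ and $\beta_1=\pi(e-1)/(2e)$ --- is a plan, not a proof. The missing idea is supplied by the paper's Lemma \ref{lemma3}, and it does not go through monotone l'H\^opital at all: one differentiates the reciprocal $f_4=P/(A+G-X)$ directly, observes that the sign of $f_4'$ is governed by the factor $e(1+\cos x)-(x+\sin x)e^{x\cot x}/\sin x$, and proves its positivity by taking logarithms: setting $f_5(x)=\log\bigl((x+\sin x)e^{x\cot x}/\sin x\bigr)-\log\bigl(e(1+\cos x)\bigr)$, one finds $f_5'(x)=\bigl(2-x\left(\cot x+x\csc^2 x\right)\bigr)/(x+\sin x)$, which is negative by the very inequality $\chi>0$ you already established, and $f_5(0^+)=0$, hence $f_5<0$ on $(0,\pi/2)$ and $f_4'>0$. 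In other words, the exponential term you flagged as the obstacle is tamed by passing to logarithms, after which the inequality from the first half finishes the job; without this (or some substitute) your proof of \eqref{30ineqb} is incomplete.
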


\begin{remark} \rm In \cite[Theorem 2.7]{sandornew}, S\'andor proved that for $a\neq b$,
\begin{equation}\label{ine1502a}
X<A\left[\frac{1}{e}+\left(1-\frac{1}{e}\right)\frac{G}{P}\right],
\end{equation}
and
\begin{equation}\label{ine1502b}
Y<G\left[\frac{1}{e}+\left(1-\frac{1}{e}\right)\frac{L}{A}\right].
\end{equation}
As $A/P >1$, the right side of \eqref{30ineqa} gives a slight improvement to \eqref{ine1502a}.
From \eqref{ine1502b}, as clearly  $G\cdot L/A <A$, we get a similar inequality. The second inequality in \eqref{30ineqb} could be a counterpart of the inequality $L+G-A<Y$ studied in 
\cite[Theorem 20]{barsan}.
\end{remark}



H. Alzer \cite{alzer0} proved the following inequalities:
\begin{equation}\label{2602alzer}
1<(A+G)/(L+I)<  e/2,
\end{equation}
where the constants $1$ and $2/e$ are best possible. The following result improves among others the right side of \eqref{2602alzer}.

\begin{theorem}\label{2602thm0} For $a\neq b$, one has
\begin{equation}\label{2402a}
(A+G)/e< X< M_q < (L+I)/2 < (A+G)/2,
\end{equation}
 where $q= \log (2)/(1+\log (2))\approx 0.4094$
is the best possible constant.
\end{theorem}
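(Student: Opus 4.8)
The inequality \eqref{2402a} is a chain of four links, and the plan is to treat them separately, reserving the genuinely new estimate $M_q<(L+I)/2$ for last. Two of the links are essentially already available. The bound $X<M_q$ with $q=\log(2)/(1+\log(2))$ is precisely the right-hand inequality of \eqref{89ineq} evaluated at the threshold exponent, so I would simply invoke the theorem of Chu et al., which also guarantees that $q$ cannot be lowered. For the outermost right link, note that $(L+I)/2<(A+G)/2$ is nothing but $L+I<A+G$, i.e.\ the left-hand inequality of Alzer's \eqref{2602alzer}; I would quote it directly. It is also worth recording the identity $(A+G)/2=M_{1/2}$, since together with the monotonicity of power means it gives the crude bound $M_q<(A+G)/2$ for free and helps orient the chain.

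For the leftmost link $(A+G)/e<X$ I would argue elementarily. Writing $X=Ae^{G/P-1}$, the claim is equivalent to $A+G<Ae^{G/P}$. Since $G/P>0$, the elementary inequality $e^u>1+u$ gives $Ae^{G/P}>A\bigl(1+G/P\bigr)=A+AG/P$, and by part (1) of Theorem \ref{sandortheorem} we have $AG/P>G$ (equivalently $P<A$). Combining these, $Ae^{G/P}>A+G$, which is the desired estimate; this step is short and presents no difficulty.

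The crux is the middle link $M_q<(L+I)/2$. Here I would first use homogeneity to normalise, setting $a=e^{t}$, $b=e^{-t}$ with $t>0$, so that $G=1$, $L=\sinh(t)/t$, $I=e^{t\coth t-1}$, and $M_q=(\cosh qt)^{1/q}$. The inequality then becomes the single-variable statement
\[
(\cosh qt)^{1/q}<\frac{1}{2}\left(\frac{\sinh t}{t}+e^{t\coth t-1}\right),\qquad t>0.
\]
I would prove this by splitting into a neighbourhood of $t=0$ and the range of large $t$: near the origin both sides tend to $1$, so I would compare the power-series expansions (using the auxiliary bounds on $L$ and $I$ proved as lemmas in Section 3), while for large $t$ an asymptotic analysis is required.

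The main obstacle is exactly this large-$t$ regime, because the inequality is asymptotically sharp: as $a/b\to\infty$ one checks that $X/A$, $M_q/A$ and $(L+I)/(2A)$ all tend to $1/e$, the common limit being what pins down $q=\log(2)/(1+\log(2))$ and makes $q$ best possible simultaneously for $X<M_q$ and for $M_q<(L+I)/2$. Consequently the difference of the two sides tends to $0$, so crude bounds will not suffice; I would instead control the sign of this difference through a monotonicity argument for a suitably chosen quotient (or via sufficiently precise second-order expansions), which is where the real work lies.
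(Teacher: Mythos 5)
Your treatment of three of the four links is sound and essentially coincides with the paper's. For $X<M_q$ you quote Chu et al.\ \cite{chuet}, exactly as the paper does; for $(L+I)/2<(A+G)/2$ you reduce to $L+I<A+G$ and quote the left side of \eqref{2602alzer}, again as the paper does; and your elementary proof of $(A+G)/e<X$ is the paper's own argument in mean-theoretic dress: the paper writes the claim trigonometrically as $\log(1+\cos x)<x\cot x$ and uses $\log(1+t)<t$ together with $\sin x<x$, which are precisely your $e^u>1+u$ (at $u=G/P=x\cot x$) and $P<A$.

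The genuine gap is the middle link $M_q<(L+I)/2$. You treat it as a new estimate to be proved from scratch, but your proposal never proves it: you reduce it to a one-variable inequality, describe a small-$t$ expansion and a large-$t$ asymptotic, observe that the inequality is asymptotically sharp, and then defer the decisive step (``a monotonicity argument for a suitably chosen quotient\dots where the real work lies'') without carrying it out. Even as a plan this is incomplete --- a local analysis near $t=0$ and near $t=\infty$ says nothing about the intermediate range, and since the difference of the two sides tends to $0$ as $t\to\infty$, precisely the regime you leave open is the delicate one. What you are missing is that this inequality is not new: it is a theorem of Alzer and Qiu \cite{alzer1}, who proved $(L+I)/2>M_q$ with $q=\log 2/(1+\log 2)$ best possible, and the paper's proof of \eqref{2402a} simply cites them for this link (and cites \cite{chuet} and \cite{alzer1} together for the optimality of $q$ on both sides). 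So as it stands your proposal establishes only three of the four inequalities; either supply the missing global argument in full or invoke the Alzer--Qiu result.
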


\begin{remark} \rm Particularly, \eqref{2402a} implies that
\begin{equation}\label{2402b}
X< (L+I)/2,  
\end{equation}
which is new.
Since  $L<X< I$  (see Theorem \ref{sandortheorem} and \ref{sandorbhayo-theorem}),  $X$  is below the arithmetic mean of $L$ and $I$. In fact, by left side of \eqref{89ineq}, and by $L<M_{1/3}$ and 
$L<I < M_{2/3}$, we get also
\begin{equation}\label{2402c}
L< M_{1/3}< X<M_q< (L+I)/2 <I < M_{2/3}
\end{equation}
\end{remark}

\begin{theorem}\label{2602thm1} For $a\neq b$, one has
\begin{equation}\label{2402ee}
A+G-P< X< P^2/A < (A+G)/2.
\end{equation} 
\end{theorem}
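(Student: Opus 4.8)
The plan is to collapse the two-variable problem to one variable. Assuming without loss of generality that $a>b$ (the case $a<b$ is symmetric by $X(a,b)=X(b,a)$), set $t=(a-b)/(a+b)\in(0,1)$ and $\theta=\arcsin t\in(0,\pi/2)$. A direct computation gives $G/A=\sqrt{1-t^2}=\cos\theta$ and $P/A=t/\arcsin t=\sin\theta/\theta$, whence $G/P=\theta\cot\theta$ and $X/A=e^{G/P-1}=e^{\theta\cot\theta-1}$. Since every quantity in \eqref{2402ee} is homogeneous of degree one, dividing through by $A$ converts the chain into three inequalities in the single variable $\theta$, which I will treat separately.

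The leftmost inequality $A+G-P<X$ is precisely the left half of \eqref{30ineqb} with the best constant $\alpha_1=1$, already established in Theorem \ref{thm1}, so no new argument is needed there. For the rightmost inequality $P^2/A<(A+G)/2$, dividing by $A$ gives $(\sin\theta/\theta)^2<(1+\cos\theta)/2=\cos^2(\theta/2)$; using $\sin\theta=2\sin(\theta/2)\cos(\theta/2)$ and cancelling the positive factor $\cos^2(\theta/2)$, this is equivalent to $\sin(\theta/2)<\theta/2$, the elementary inequality valid on $\theta/2\in(0,\pi/4)$. Both of these are routine.

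The substantive work is the middle inequality $X<P^2/A$. After dividing by $A$ and taking logarithms (both sides are positive), the claim becomes $\theta\cot\theta-1<2\log(\sin\theta/\theta)$, i.e. $\phi(\theta):=2\log(\sin\theta/\theta)-\theta\cot\theta+1>0$ on $(0,\pi/2)$. Since $\phi(0^+)=0$, it suffices to prove $\phi'>0$. Differentiating and clearing the positive factor $\theta\sin^2\theta$ gives $\theta\sin^2\theta\,\phi'(\theta)=\psi(\theta)$, where $\psi(\theta)=\theta^2+\theta\sin\theta\cos\theta-2\sin^2\theta$. The key observation, which I expect to carry the whole proof, is that repeated differentiation collapses $\psi$ cleanly: one checks $\psi(0)=\psi'(0)=\psi''(0)=\psi'''(0)=0$, while $\psi^{(4)}(\theta)=8\theta\sin 2\theta>0$ on $(0,\pi/2)$. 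Integrating this positivity up four times yields in turn $\psi'''>0$, $\psi''>0$, $\psi'>0$, and finally $\psi>0$, hence $\phi'>0$ and the inequality $X<P^2/A$.

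The main obstacle is exactly this middle step, because $\psi$ dips very close to $0$ (numerically around $\theta\approx 0.7$), so a crude estimate will not separate the two sides; the clean factorization $\psi^{(4)}(\theta)=8\theta\sin 2\theta$ is what makes the positivity transparent and lets the successive integration argument replace an otherwise delicate direct bound. Once the three pieces are assembled in the $\theta$ variable, translating back through the substitution completes the chain \eqref{2402ee}.
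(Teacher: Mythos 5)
Your proof is correct, and it takes a genuinely different, more self-contained route than the paper's. The paper's proof of Theorem \ref{2602thm1} is essentially an assembly of known results: the middle inequality $X<P^2/A$ is simply quoted from \cite{sandortwosharp}, where it appears as $P^2>AX$; the last inequality is deduced from S\'andor's bound $P<(2A+G)/3$ together with the observation that $((2A+G)/3)^2<A(A+G)/2$ reduces to $2G^2<A^2+AG$, true since $G<A$; and the first inequality is left to prior results (Theorem \ref{thm1}, equivalently Theorem 2.10 of \cite{sandornew}), exactly as you do. You instead prove the two nontrivial inequalities from scratch after the substitution $G/A=\cos\theta$, $P/A=\sin\theta/\theta$, $X/A=e^{\theta\cot\theta-1}$. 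For the middle one your computation checks out: with $\phi(\theta)=2\log(\sin\theta/\theta)-\theta\cot\theta+1$ one indeed gets $\theta\sin^2\theta\,\phi'(\theta)=\theta^2+\theta\sin\theta\cos\theta-2\sin^2\theta=\psi(\theta)$, and writing $\psi(\theta)=\theta^2+\tfrac{\theta}{2}\sin 2\theta+\cos 2\theta-1$ confirms $\psi(0)=\psi'(0)=\psi''(0)=\psi'''(0)=0$ and $\psi^{(4)}(\theta)=8\theta\sin 2\theta>0$ on $(0,\pi/2)$, so the four successive integrations are valid. For the last one, cancelling $\cos^2(\theta/2)$ to reach $\sin(\theta/2)<\theta/2$ is correct and cleaner than the paper's detour through $P<(2A+G)/3$. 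What the paper's approach buys is brevity; what yours buys is a stand-alone proof of $P^2>AX$ (which this paper only cites) and a more transparent proof of $P^2/A<(A+G)/2$. One minor correction to your commentary: $\psi$ does not ``dip'' near $\theta\approx 0.7$; it is increasing on $(0,\pi/2)$ (as your own argument shows) and is merely small there because it vanishes to sixth order at the origin, $\psi(\theta)=\tfrac{2}{45}\theta^6+O(\theta^8)$. This does not affect the validity of your proof.
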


\begin{remark}\rm
The right side of \eqref{2402ee} offers another refinement  to  $X< (A+G)/2$. An improvement of
 $P^2>XA$ appears in \cite[Theorem 2.9]{sandornew}:
$$P^2> (A^2((A+G)/2)^4)^{1/3} > AX,$$
so \eqref{2402ee} could be further refined. For the following inequalities
\begin{equation}\label{2402e}
L< \frac{2G+A}{3}<A+G-P<X<\sqrt{PX}<\frac{A+G}{2}
\end{equation} 
$$< \frac{P+X}{2}<P< \frac{2A+G}{3}<I,$$
one can see that the first inequality is Carlson's inequality, while the second written in the form 
$P<(2A+G)/3$ is due to S\'andor \cite{sandor1405}. The third inequality is Theorem 2.10 in \cite{sandornew}, while the fourth, written as $PX< ((A+G)/2)^2$ is Theorem 2.11 of \cite{sandornew}. The inequality $(P+X)/2<P$ follows by $X<P$, while the last two inequalities are due to S\'andor 
(\cite{sandor1405, sandord}).
\end{remark}


\begin{theorem}\label{2602thm2} For $a\neq b$, one has
\begin{equation}\label{2402d}
M_{1/2}< (P+X)/2< M_k,     
\end{equation}
where  $k= (5\log 2+2)/(6(\log 2 +1))\approx 0.5380$.
\end{theorem}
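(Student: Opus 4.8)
The plan is to use homogeneity and symmetry to reduce \eqref{2402d} to a single variable, and then treat the two inequalities by different routes. Assuming $a>b$ and setting $t=(a-b)/(a+b)\in(0,1)$, I normalize $A=1$ and put $t=\sin\theta$, $\theta\in(0,\pi/2)$. Then $G=\cos\theta$, $P=\sin\theta/\theta$ and $G/P=\theta\cot\theta$, so that $X=e^{\theta\cot\theta-1}$. The lower mean also simplifies: since $\sqrt{1\pm\sin\theta}=\cos(\theta/2)\pm\sin(\theta/2)$ on $(0,\pi/2)$,
\[
M_{1/2}=\left(\frac{\sqrt{1+\sin\theta}+\sqrt{1-\sin\theta}}{2}\right)^{2}=\cos^{2}\frac{\theta}{2}=\frac{1+\cos\theta}{2}.
\]
Hence the left inequality of \eqref{2402d} is equivalent to
\[
e^{\theta\cot\theta-1}>1+\cos\theta-\frac{\sin\theta}{\theta},\qquad\theta\in(0,\pi/2),
\]
while the right inequality I would obtain from sharp power-mean bounds for $P$ and $X$ separately.

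For the upper bound I do not aim at the best exponent (the true optimal power lies strictly below $k$); I would instead combine the classical sharp estimate $P<M_{2/3}$ with $X<M_q$ from \eqref{89ineq}, where $q=\log2/(1+\log2)$. This yields $(P+X)/2<(M_{2/3}+M_q)/2$, and the constant in the theorem is precisely the arithmetic mean of the two exponents, because
\[
\frac{1}{2}\left(\frac{2}{3}+\frac{\log2}{1+\log2}\right)=\frac{5\log2+2}{6(\log2+1)}=k.
\]
It then remains to prove a power-mean averaging inequality $(M_{2/3}+M_q)/2\le M_{(2/3+q)/2}$. Writing $a=e^{z}$, $b=e^{-z}$ turns $M_p$ into $(\cosh pz)^{1/p}$, and the desired estimate becomes midpoint concavity of $p\mapsto(\cosh pz)^{1/p}$ on $[q,2/3]$; this holds trivially as $z\to\infty$ but must be checked for small $z$, since $p\mapsto M_p$ is not globally concave on this interval, and I would settle it by a direct second-derivative estimate.

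The delicate part is the left inequality, which is sharp. Expanding in $t$ one finds that $(P+X)/2$ and $M_{1/2}$ coincide through order four, their difference being
\[
\frac{P+X}{2}-M_{1/2}=\frac{1}{6480}\,t^{6}+O(t^{8})>0,
\]
so the exponent $1/2$ is best possible and, crucially, the margin is far too thin for a naive majorization of $e^{\theta\cot\theta-1}$ or for any single truncated Taylor polynomial to certify positivity on all of $(0,\pi/2)$. My plan is to set $F(\theta)=e^{\theta\cot\theta-1}-\bigl(1+\cos\theta-\sin\theta/\theta\bigr)$ and prove $F>0$ by the monotone form of l'Hôpital's rule: after an appropriate normalization (dividing by $\theta^{6}$, or passing to an equivalent quotient of functions vanishing to the right order at $0$), I would show the associated ratio of derivatives is monotone, so that $F$ inherits its sign from the limit $\theta\to0^{+}$. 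Carrying out this monotonicity, that is, controlling $e^{\theta\cot\theta-1}$ and $\theta\cot\theta$ against the algebraic terms uniformly up to $\theta=\pi/2$, is the principal obstacle of the proof.
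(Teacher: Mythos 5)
Your proposal is a plan with both halves left open, so as written it does not constitute a proof. For the left inequality of \eqref{2402d} your reduction is correct but stops exactly where the content lies: since $M_{1/2}=\left(\frac{\sqrt{a}+\sqrt{b}}{2}\right)^{2}=\frac{A+G}{2}$, the claim $M_{1/2}<(P+X)/2$ is precisely $P+X>A+G$, i.e.\ $X>A+G-P$, which in your normalization is the display $e^{\theta\cot\theta-1}>1+\cos\theta-\sin\theta/\theta$ that you label ``the principal obstacle'' and do not prove. The paper settles this in one line by quoting S\'andor's inequality $P+X>A+G$ \cite[Theorem 2.10]{sandornew}; moreover the present paper contains an independent proof of exactly this: Lemma \ref{lemma3} shows that $f_4(x)=\frac{\sin x}{x\left(1+\cos x-e^{x\cot x-1}\right)}$ is increasing on $(0,\pi/2)$ with infimum $1$, which is literally your target inequality (it also gives the sharp companion constant, cf.\ \eqref{30ineqb} with $\alpha_1=1$). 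So your monotone-l'H\^opital strategy is viable --- Lemma \ref{lemma3} essentially executes it, reducing the sign question to the bound $x\left(\cot x+x\csc^2 x\right)>2$ from the proof of Lemma \ref{lemma2} --- but none of it is carried out in your text; and the $t^{6}/6480$ sharpness computation, though correct, is not needed, since the theorem does not assert optimality of the exponent $1/2$ (that is the paper's closing open problem).

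For the right inequality you follow the same route as the paper: combine $P<M_{2/3}$ \cite{sandor1405} with $X<M_q$ \cite{chuet}, note that $(2/3+q)/2=k$ (your arithmetic is right), and finish with the midpoint estimate $(M_{2/3}+M_q)/2<M_{k}$. This last step is the real issue, and your proposal leaves it open; worse, the method you announce --- ``a direct second-derivative estimate'' --- cannot work as stated. As you yourself observe, $p\mapsto M_p$ is not concave on $[q,2/3]$: writing $t=(a-b)/(a+b)$, one has $\partial_p^2 M_p=\frac{1-2p}{4}\,t^{4}+O(t^{6})$, which is positive for $p<1/2$, and $q\approx 0.409<1/2$; so the second derivative changes sign on the interval, and no pointwise bound on it yields midpoint concavity. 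Any correct proof must exploit the particular triple $(q,k,2/3)$, e.g.\ by estimating $M_{k}-(M_{2/3}+M_q)/2$ uniformly in $a/b$ (it vanishes to fourth order at $a=b$ with positive leading coefficient, and stays positive as $a/b\to\infty$ because $q>\tfrac12\log 2$, so the limit profile $2^{-1/p}$ is concave there). Your scruple on this point in fact exposes a defect in the paper's own argument: the paper deduces concavity of $t\mapsto M_t$ from its log-concavity \cite{sandorlog}, but log-concavity does not imply concavity (only log-convexity implies convexity), and concavity genuinely fails on part of $[q,2/3]$. So on this half neither your proposal nor the paper's proof as written is complete; the inequality $(M_{2/3}+M_q)/2<M_{k}$ appears to be true, but it requires an argument that neither text supplies.
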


\begin{remark}\rm
One has
\begin{equation}\label{2402f}
L< \frac{2G+A}{3}<X<\frac{L+I}{2}<\frac{A+G}{2}< \frac{P+X}{2}<P< \frac{2A+G}{3}<I.
\end{equation} 
and 
\begin{equation}\label{2402g}
\sqrt{AG}<\sqrt{PX}< \frac{A+G}{2}.
\end{equation} 
Relation \eqref{2402g} shows  that  $\sqrt{PX}$ lies between the geometric and arithmetic means of $A$ and $G$;
while \eqref{2402e} shows among others that $(A+G)/2$ lies between the geometric and arithmetic means of $P$ and $X$.
\end{remark}



\begin{theorem}\label{2602-thm} One has
\begin{equation}\label{2602-9}
M_p\leq M_{1/3}< (2G+A)/3 < X,\quad {\rm for}\quad p\leq 1/3,
\end{equation}
\begin{equation}\label{2602-10}
H_\alpha\leq  H_{1/2}< (2G+A)/3 <X,  \quad {\rm for}\quad \alpha \leq 1/2.
\end{equation}
\end{theorem}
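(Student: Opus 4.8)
The plan is to observe that each chain in \eqref{2602-9} and \eqref{2602-10} splits into three links, one of which is already available. The rightmost link $(2G+A)/3 < X$ is exactly the second inequality of Theorem~\ref{sandorbhayo-theorem}(4), so it can be cited directly. The leftmost links are pure monotonicity statements about parametric families of means, and the two middle links $M_{1/3} < (2G+A)/3$ and $H_{1/2} < (2G+A)/3$ carry the actual content of the theorem.

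For the monotonicity links I would invoke the fact that the power mean $M_p(a,b)$ is nondecreasing in $p$ (the classical power-mean inequality), which gives $M_p \le M_{1/3}$ for $p \le 1/3$ at once. For the Heronian family I would first note that
$$H_p(a,b) = \left(\frac{a^p + (\sqrt{ab})^p + b^p}{3}\right)^{1/p},$$
i.e.\ $H_p$ is nothing but the $p$-th power mean of the three numbers $a$, $\sqrt{ab}$, $b$. Since power means of a fixed finite list are nondecreasing in the exponent, $H_p$ is nondecreasing in $p$, whence $H_\alpha \le H_{1/2}$ for $\alpha \le 1/2$. This reduces everything to the two central inequalities.

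To handle these I would exploit homogeneity and normalize $G = \sqrt{ab} = 1$, taking $a\neq b$. For the first, writing $a = u^3$, $b = u^{-3}$ with $u > 1$ and setting $w = (u + u^{-1})/2 > 1$, one finds $M_{1/3} = w^3$ while $A = 4w^3 - 3w$ (via $u^3 + u^{-3} = (2w)^3 - 6w$). The inequality $M_{1/3} < (2G+A)/3$ then collapses to $w^3 - 3w + 2 > 0$, and since $w^3 - 3w + 2 = (w-1)^2(w+2)$, positivity is immediate for $w > 1$. For the second, writing $a = v^2$, $b = v^{-2}$ with $v > 1$ and setting $z = (v + v^{-1})/2 > 1$, one gets $H_{1/2} = ((2z+1)/3)^2$ and $A = 2z^2 - 1$ (from $v^2 + v^{-2} = 4z^2 - 2$). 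Now $H_{1/2} < (2G+A)/3$ reduces to $(2z+1)^2 < 3(2z^2+1)$, that is $2(z-1)^2 > 0$, again clear for $z > 1$.

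The argument is essentially routine once these reductions are in place; the only genuine decisions are the choices of substitution. The cube-root substitution $a = u^3$ and square-root substitution $a = v^2$ are engineered precisely so that the selected means become polynomials in a single symmetric variable, turning each inequality into a one-variable polynomial that factors as a perfect square times a positive factor. The main (minor) obstacle is thus recognizing the Heronian mean as a three-term power mean in order to obtain its monotonicity for free, and then selecting the substitutions that linearize the algebra; after that, no calculus or delicate estimation is required.
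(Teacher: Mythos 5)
Your proposal is correct and takes essentially the same route as the paper: the identical three-link decomposition, the same appeals to monotonicity of $M_p$ and $H_p$ in the exponent and to Theorem \ref{sandorbhayo-theorem}(4) for $(2G+A)/3<X$, with the two middle inequalities \eqref{2602a0} and \eqref{2602b0} verified by elementary algebra reducing to a perfect-square factorization. The only (cosmetic) difference is in that algebra: the paper uses the substitution $a/b=t^6$, yielding $(t-1)^4(t^2+4t+1)>0$, and the identity $\sqrt{a}+\sqrt{b}=\sqrt{2(A+G)}$, yielding $(A-G)^2>0$, whereas your symmetric-variable substitutions arrive at the equivalent factorizations $(w-1)^2(w+2)>0$ and $2(z-1)^2>0$ with slightly lighter computations.
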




\begin{theorem}\label{thm30} 
For $a\neq b$, one has
$$(AX)^{1/\alpha_2}<P<(AX^{\beta_2})^{1/(1+\beta_2)},$$
with best possible constants $\alpha_2=2$ and $\beta_2=\log(\pi/2)/\log(2e/\pi)\approx 0.8234.$
\end{theorem}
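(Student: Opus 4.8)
The plan is to reduce both inequalities to a single monotonicity statement for a function of one variable, obtained through a trigonometric substitution. Assume $a>b>0$ and set $\theta=\arcsin\frac{a-b}{a+b}\in(0,\pi/2)$. Since $A$, $G$, $P$, $X$ are homogeneous of degree one, it suffices to record the ratios
$$\frac{G}{A}=\cos\theta,\qquad \frac{P}{A}=\frac{\sin\theta}{\theta},\qquad \frac{G}{P}=\theta\cot\theta,\qquad \frac{X}{A}=e^{\,\theta\cot\theta-1}.$$
The left inequality $(AX)^{1/2}<P$ is equivalent to $AX<P^2$, i.e. $X/A<(P/A)^2$, which upon taking logarithms reads $\theta\cot\theta-1<2\log(\sin\theta/\theta)$. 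The right inequality $P<(AX^{\beta_2})^{1/(1+\beta_2)}$ is homogeneous and equivalent to $P/A<(X/A)^{\beta_2/(1+\beta_2)}$, i.e. $\log(\sin\theta/\theta)<\frac{\beta_2}{1+\beta_2}(\theta\cot\theta-1)$. A direct computation from the definition of $\beta_2$ gives $\frac{\beta_2}{1+\beta_2}=\log(\pi/2)$, so that, after dividing by the negative quantity $\theta\cot\theta-1$ (which reverses the inequalities), both statements are captured by the two-sided bound
$$\log\frac{\pi}{2}<f(\theta)<\frac12,\qquad f(\theta):=\frac{\log(\sin\theta/\theta)}{\theta\cot\theta-1},\qquad \theta\in(0,\pi/2).$$

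The core step is to show that $f$ is strictly decreasing on $(0,\pi/2)$, and the cleanest route is via power series. Writing $w_n=\frac{2^{2n}|B_{2n}|}{(2n)!}>0$ for the Bernoulli numbers $B_{2n}$, the classical expansion of $\cot\theta$ together with its antiderivative give $1-\theta\cot\theta=\sum_{n\ge1}w_n\theta^{2n}$ and $-\log(\sin\theta/\theta)=\sum_{n\ge1}\frac{w_n}{2n}\theta^{2n}$ on $(0,\pi)$, whence
$$f(\theta)=\frac{\sum_{n\ge1}(w_n/2n)\,\theta^{2n}}{\sum_{n\ge1}w_n\,\theta^{2n}}.$$
The quotient of the $n$-th coefficients equals $\frac{w_n/2n}{w_n}=\frac{1}{2n}$, which is strictly decreasing in $n$. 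By the standard monotonicity lemma for ratios of power series (the Biernacki--Krzyz lemma, which I would state in Section~3), a ratio $\sum a_n t^n/\sum b_n t^n$ with $b_n>0$ and $a_n/b_n$ decreasing is itself decreasing; applied with $t=\theta^2$ this shows $f$ is strictly decreasing on $(0,\pi/2)$.

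The two endpoint limits then finish the argument. As $\theta\to0^+$ the ratio of power series tends to the leading coefficient quotient $\frac{w_1/2}{w_1}=\frac12$, and at $\theta\to(\pi/2)^-$ one computes directly $f(\theta)\to\frac{\log(2/\pi)}{(\pi/2)\cot(\pi/2)-1}=\log\frac{\pi}{2}$. Hence $\log\frac{\pi}{2}<f(\theta)<\frac12$ with both bounds sharp and attained only in the limit. The upper bound $\tfrac12=\sup f$ yields the left inequality $X<P^2/A$ (which also follows from Theorem~\ref{2602thm1}) and identifies $\alpha_2=2$ as the largest admissible exponent, while the condition $\frac{\beta}{1+\beta}\le\inf f=\log\frac{\pi}{2}$ shows that $P<(AX^{\beta})^{1/(1+\beta)}$ holds for all $a\neq b$ precisely when $\beta\le\beta_2$, so $\beta_2$ is best possible.

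I expect the only genuine obstacle to be the bookkeeping around the two series expansions and the precise citation of the ratio lemma; the decisive fact that $a_n/b_n=1/(2n)$ is decreasing is immediate, so no further work is needed there. An alternative to the power-series step is the monotone form of l'Hospital's rule applied to $N/D$ with $N=\log(\sin\theta/\theta)$ and $D=\theta\cot\theta-1$ (both vanishing at $0$), but this would force an additional monotonicity check on $N'/D'$, making the power-series argument the preferable one.
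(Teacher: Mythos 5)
Your proof is correct, and it is essentially the paper's own argument in a lightly streamlined form. The paper deduces the theorem from Lemma \ref{lemma5}: the function $h(x)=\log(x/\sin x)/\log\bigl(e^{1-x\cot x}\sin x/x\bigr)$, which in mean language is $\log(A/P)/\log(P/X)$, decreases from $(0,\pi/2)$ onto $(\beta_2,1)$. Your function $f(\theta)=\log(\sin\theta/\theta)/(\theta\cot\theta-1)=\log(A/P)/\log(A/X)$ is related to the paper's by the increasing M\"obius map $f=h/(1+h)$, and your endpoints $\log(\pi/2)=\beta_2/(1+\beta_2)$ and $1/2=1/(1+1)$ correspond exactly to the paper's endpoints $\beta_2$ and $1$, so the two monotonicity statements are equivalent and yield the theorem in the same way (upper bound $\Leftrightarrow P^2>AX$, lower bound $\Leftrightarrow P^{1+\beta_2}<AX^{\beta_2}$, sharpness from the unattained endpoint limits). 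The one genuine difference is the mechanism for monotonicity: the paper forms the derivative ratio $h_1'/h_2'=(1-x\cot x)/\bigl((x/\sin x)^2-1\bigr)$, expands it via \eqref{xcot} and \eqref{cosec}, applies the Biernacki--Krzy\.z Lemma \ref{lembk} (coefficient ratio $1/(2n-1)$), and then must invoke the monotone l'H\^opital rule (Lemma \ref{lem0}) to transfer monotonicity from $h_1'/h_2'$ back to $h$; you instead integrate the cotangent expansion so that the numerator and denominator of $f$ are themselves explicit power series, and apply Lemma \ref{lembk} once with coefficient ratio $1/(2n)$, eliminating any need for Lemma \ref{lem0}. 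Your route is therefore slightly shorter and more self-contained; what the paper's formulation buys is that Lemma \ref{lemma5} directly produces the displayed inequality \eqref{2102a}, which the paper notes gives a new optimal proof of the trigonometric inequalities \eqref{0209f}. Your handling of the endpoint limits and of the optimality of $\alpha_2=2$ and $\beta_2$ matches the paper's (and shares the same mild informality about the non-homogeneous family $(AX)^{1/\alpha}$, which is an issue of the theorem's statement, not of your argument).
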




\section{\bf Preliminaries and lemmas}

The following result by Biernacki and Krzy\.z \cite{bier} will
be used in studying the monotonicity of certain power series.

\begin{lemma}\label{lembk}
For $0<R\leq \infty$. Let $A(x)=\sum_{n=0}^\infty a_nx^n$ and 
$C(x)=\sum_{n=0}^\infty c_nx^n$ be two real power series converging on the interval $(-R,R)$. If the sequence
$\{a_n/c_n\}$ is increasing (decreasing) and $c_n>0$ for all $n$, then the function $A(x)/C(x)$ is also
increasing (decreasing) on $(0,R)$.
\end{lemma}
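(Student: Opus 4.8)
The plan is to reduce the monotonicity of the quotient $f(x) = A(x)/C(x)$ to a sign condition on a single power series, and then extract that sign coefficient by coefficient. First I would record that, since $c_n > 0$ for every $n$ and $x \in (0,R)$, we have $C(x) = \sum_{n\ge 0} c_n x^n > 0$; consequently $f$ is well defined and differentiable on $(0,R)$, with
\begin{equation*}
f'(x) = \frac{A'(x)C(x) - A(x)C'(x)}{C(x)^2}.
\end{equation*}
Because the denominator is positive, the sign of $f'$ coincides with the sign of the numerator $N(x) := A'(x)C(x) - A(x)C'(x)$, so it suffices to show that $N(x) \ge 0$ on $(0,R)$ in the increasing case (and $N(x) \le 0$ in the decreasing case).

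Next I would expand $N$ as a power series using term-by-term differentiation and the Cauchy product, both justified inside the radius of convergence. Writing $A'(x) = \sum_{m\ge 1} m a_m x^{m-1}$ and $C'(x) = \sum_{k\ge 1} k c_k x^{k-1}$ and collecting the coefficient of $x^n$ gives
\begin{equation*}
N(x) = \sum_{n\ge 0} \Bigg( \sum_{m=0}^{n+1} (2m - n - 1)\, a_m\, c_{\,n+1-m} \Bigg) x^n,
\end{equation*}
since the terms with $m=0$ or $k=0$ drop out and the substitution $k = n+1-m$ turns the factor $m-k$ into $2m - n - 1$.

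The heart of the argument is a symmetric pairing of the inner sum. For fixed $n$, I would pair the index $m$ with the index $n+1-m$: their combined contribution is
\begin{equation*}
(2m - n - 1)\big( a_m\, c_{\,n+1-m} - a_{\,n+1-m}\, c_m \big),
\end{equation*}
and when $n+1$ is even the self-paired middle index $m = (n+1)/2$ contributes zero because then $2m - n - 1 = 0$. If $\{a_n/c_n\}$ is increasing, then for $m < n+1-m$ one has $2m - n - 1 < 0$ and $a_m/c_m \le a_{\,n+1-m}/c_{\,n+1-m}$, whence (using positivity of the $c$'s) $a_m c_{\,n+1-m} - a_{\,n+1-m} c_m \le 0$; the product of these two nonpositive factors is nonnegative. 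Hence every coefficient of $N$ is nonnegative, so $N(x) \ge 0$ on $(0,R)$ and $f$ is increasing there. The decreasing case follows by reversing each inequality, or equivalently by applying the increasing case to $-A$.

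I expect the only delicate point to be the bookkeeping in the pairing step, namely correctly matching the factor $2m-n-1$ with its negative $-(2m-n-1)$ under the involution $m \mapsto n+1-m$ and disposing of the self-paired middle term, together with the routine justification of differentiating and multiplying the series termwise within $(-R,R)$.
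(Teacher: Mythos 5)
Your proof is correct. Note, however, that the paper itself does not prove this lemma at all: it is quoted verbatim from Biernacki and Krzy\.z \cite{bier} as a known tool, so there is no internal argument to compare yours against. What you have written is, in substance, the classical proof of that cited result: reduce the sign of $(A/C)'$ to the sign of $N=A'C-AC'$, expand $N$ by the Cauchy product to get the coefficient $\sum_{m=0}^{n+1}(2m-n-1)a_m c_{n+1-m}$ of $x^n$, and exploit the involution $m\mapsto n+1-m$ so that each pair contributes $(2m-n-1)\bigl(a_m c_{n+1-m}-a_{n+1-m}c_m\bigr)\ge 0$ under the monotonicity hypothesis on $\{a_n/c_n\}$ and the positivity of the $c_n$. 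Your bookkeeping is right (the $m=0$ and $k=0$ terms vanish, the middle index is annihilated by the factor $2m-n-1$, and the decreasing case follows by applying the increasing case to $-A$), and the termwise differentiation and multiplication are legitimate inside the common interval of convergence. The only caveat worth recording is that this argument yields non-strict monotonicity of $A(x)/C(x)$, which matches the statement as given; if one wants strict monotonicity (as is implicitly used elsewhere in the paper, e.g.\ in Lemma \ref{lemma5}), one should observe that strict monotonicity of $\{a_n/c_n\}$ at even one pair of indices makes some coefficient of $N$ strictly positive, so $N(x)>0$ for $x\in(0,R)$.
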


For $|x|<\pi$, the following power series expansions can be found in \cite[1.3.1.4 (2)--(3)]{jef},
\begin{equation}\label{xcot}
x \cot x=1-\sum_{n=1}^\infty\frac{2^{2n}}{(2n)!}|B_{2n}|x^{2n},
\end{equation}

\begin{equation}\label{cot}
\cot x=\frac{1}{x}-\sum_{n=1}^\infty\frac{2^{2n}}{(2n)!}|B_{2n}|x^{2n-1},
\end{equation}
and 
\begin{equation}\label{coth}
{\rm \coth x}=\frac{1}{x}+\sum_{n=1}^\infty\frac{2^{2n}}{(2n)!}|B_{2n}|x^{2n-1},
\end{equation}
where $B_{2n}$ are the even-indexed Bernoulli numbers 
(see \cite[p. 231]{IR}). 
We can get the following expansions directly from (\ref{cot}) and (\ref{coth}),

\begin{equation}\label{cosec}
\frac{1}{(\sin x)^2}=-(\cot x)'=\frac{1}{x^2}+\sum_{n=1}^\infty\frac{2^{2n}}{(2n)!}
|B_{2n}|(2n-1)x^{2n-2},
\end{equation}

\begin{equation}\label{cosech}
\frac{1}{(\sinh x)^2}=-({\rm coth} x)'=\frac{1}{x^2}-\sum_{n=1}^\infty\frac{2^{2n}}{(2n)!}(2n-1)|B_{2n}|x^{2n-2}.
\end{equation}
For the following expansion formula 
\begin{equation}\label{xsin}
\frac{x}{\sin x}=1+\sum_{n=1}^\infty\frac{2^{2n}-2}{(2n)!}|B_{2n}|x^{2n}
\end{equation}
see \cite{li}.


For easy reference we recall the following lemma from \cite{barsan,
barsan2}.
\begin{lemma}\label{lemma1} For $a>b>0$, $x\in(0,\pi/2)$ and $y>0$, one has
$$
\frac{P}{A}= \frac{\sin (x)}{x},\ \frac{G}{A} = \cos(x),\, \frac{H}{A}= \cos(x)^2,\   
\frac{X}{A}= e^{x {\rm cot}(x)-1},  
$$

$$
\frac{L}{G}= \frac{\sinh (y)}{y},\, \frac{L}{A}= \frac{\tanh (y)}{y},\
 \frac{H}{G}= \frac{1}{\cosh (y)},\,  
\frac{Y}{G}= e^{\tanh (y)/y -1}. 
$$

$$
\log\left(\frac{I}{G}\right)=\frac{A}{L}-1,\quad 
\log\left(\frac{Y}{G}\right)=\frac{L}{A}-1.
$$
\end{lemma}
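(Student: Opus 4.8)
The plan is to prove Lemma \ref{lemma1} entirely by explicit changes of variable together with the defining formulas of the means; no monotonicity or series machinery is needed, so the only genuine work is bookkeeping and a single logarithmic manipulation. The two blocks of trigonometric/hyperbolic identities come from two different substitutions, and the final pair of logarithmic identities follows by direct computation.

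For the first block I would set $t=(a-b)/(a+b)$; since $a>b>0$ we have $t\in(0,1)$, so there is a unique $x\in(0,\pi/2)$ with $\sin x=t$, namely $x=\arcsin\bigl((a-b)/(a+b)\bigr)$. Using $4ab=(a+b)^2-(a-b)^2$ one gets $G/A=2\sqrt{ab}/(a+b)=\sqrt{1-t^2}=\cos x$. The Seiffert mean becomes $P=(a-b)/(2\arcsin t)=(a-b)/(2x)$, so $P/A=(a-b)/((a+b)x)=t/x=\sin x/x$. Since $H=2ab/(a+b)=G^2/A$, we obtain $H/A=(G/A)^2=\cos^2 x$. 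Finally $G/P=(G/A)/(P/A)=\cos x/(\sin x/x)=x\cot x$, whence $X/A=e^{G/P-1}=e^{x\cot x-1}$ directly from $X=Ae^{G/P-1}$.

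For the second block I would instead parametrize by $y=\tfrac12\log(a/b)>0$, equivalently write $a=Ge^{y}$ and $b=Ge^{-y}$ (indeed $\sqrt{ab}=G$ and $a/b=e^{2y}$). Then $a-b=2G\sinh y$ and $\log a-\log b=2y$, so $L=G\sinh y/y$, i.e. $L/G=\sinh y/y$. Also $A=G\cosh y$, giving $A/G=\cosh y$ and hence $H/G=G/A=1/\cosh y$ (again using $H=G^2/A$). Dividing, $L/A=(\sinh y/y)/\cosh y=\tanh y/y$, and $Y/G=e^{L/A-1}=e^{\tanh y/y-1}$ from $Y=Ge^{L/A-1}$. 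The identity $\log(Y/G)=L/A-1$ in the third block is then immediate from $Y=Ge^{L/A-1}$.

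The only identity requiring real computation is $\log(I/G)=A/L-1$, and this is where I expect the only care to be needed. Expanding the definition gives $\log I=-1+(a\log a-b\log b)/(a-b)$ and $\log G=\tfrac12(\log a+\log b)$, so the task reduces to showing
$$\frac{a\log a-b\log b}{a-b}-\frac12(\log a+\log b)=\frac{A}{L}.$$
Clearing the denominator $a-b$, the numerator on the left collapses via the cancellation $a\log a-b\log b-\tfrac12(a-b)(\log a+\log b)=\tfrac12(a+b)(\log a-\log b)=A(\log a-\log b)$, and dividing by $a-b$ yields $A(\log a-\log b)/(a-b)=A/L$. Hence $\log(I/G)=A/L-1$, completing the lemma. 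In short, the statement is a catalogue of identities, and the main obstacle is simply verifying this one logarithmic collapse correctly; everything else is substitution.
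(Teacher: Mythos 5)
Your proof is correct, and every step checks out: the substitution $x=\arcsin\bigl((a-b)/(a+b)\bigr)$ gives $G/A=\cos x$, $P/A=\sin x/x$, $H/A=\cos^2x$ (via $H=G^2/A$) and hence $X/A=e^{x\cot x-1}$; the substitution $y=\tfrac12\log(a/b)$, i.e.\ $a=Ge^{y}$, $b=Ge^{-y}$, gives $A/G=\cosh y$, $L/G=\sinh y/y$, $L/A=\tanh y/y$, $H/G=1/\cosh y$ and $Y/G=e^{\tanh y/y-1}$; and the logarithmic collapse $a\log a-b\log b-\tfrac12(a-b)(\log a+\log b)=A(\log a-\log b)$ correctly yields $\log(I/G)=A/L-1$. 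Note, however, that the paper itself offers no proof of this lemma at all: it is explicitly recalled from the authors' earlier papers \cite{barsan,barsan2}, so there is no in-paper argument to compare against. Your write-up is essentially the standard derivation used in those references (the same two parametrizations of the pair $(a,b)$), so you have supplied, in a self-contained way, exactly the verification the paper delegates to its citations. One small stylistic point: the lemma as stated leaves the relation between $(a,b)$ and the parameters $x$, $y$ implicit, and your proof rightly makes it explicit that $x$ and $y$ are not free but are determined by $a$ and $b$ through $\sin x=(a-b)/(a+b)$ and $y=\tfrac12\log(a/b)$; without that reading the statement would be false, so spelling it out is a genuine improvement rather than pedantry.
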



\begin{remark}\rm  It is well known that many inequalities involving the means can be obtain from the classical inequalities of trigonometric functions. For example, the following inequality
$$
e^{(x/\tanh(x)-1)/2}<\frac{\sinh(x)}{x},\quad x>0,
$$
recently appeared in \cite[Theorem 1.6]{barsan3}, which is equivalent to 
\begin{equation}\label{ineq1003}
\frac{\sinh(x)}{x}>e^{x/\tanh(x)-1}\frac{x}{\sinh(x)}.
\end{equation} 
By Lemma \ref{lemma1}, this can be written as
$$\frac{L}{G}>\frac{I}{G}\cdot \frac{G}{L}=\frac{I}{L},$$
or 
\begin{equation}\label{ineq6}
L>\sqrt{IG}.
\end{equation}
The inequality \eqref{ineq6} was proved by Alzer \cite{alzer2}. 


The following trigonometric inequalities 
(see \cite[Theorem 1.5]{barsan3}) imply an other double inequality for Seiffert mean $P$,

\begin{equation}\label{0209f}
\begin{cases}
\displaystyle\exp\left(\frac{1}{2}\left(\frac{x}{\tan x}-1\right)\right)
< \displaystyle\frac{\sin x}{x}\\
< \displaystyle
\exp\left(\left(\log\frac{\pi}{2}\right)\left(\frac{x}{\tan x}-1\right)\right)& x\in(0,\pi/2),\\
\sqrt{AX}<P<A\left(\frac{X}{A}\right)^{\log(\pi/2)}.
\end{cases}
\end{equation}
The second mean inequality in \eqref{0209f} was also pointed out by S\'andor 
(see \cite[Theorem 2.12]{sandornew}).
\end{remark}


\begin{lemma}\cite[Theorem 2]{avv1}\label{lem0}
For $-\infty<a<b<\infty$,
let $f,g:[a,b]\to \mathbb{R}$
be continuous on $[a,b]$, and differentiable on
$(a,b)$. Let $g^{'}(x)\neq 0$
on $(a,b)$. If $f^{'}(x)/g^{'}(x)$ is increasing
(decreasing) on $(a,b)$, then so are
$$\frac{f(x)-f(a)}{g(x)-g(a)}\quad and \quad \frac{f(x)-f(b)}{g(x)-g(b)}.$$
If $f^{'}(x)/g^{'}(x)$ is strictly monotone,
then the monotonicity in the conclusion
is also strict.
\end{lemma}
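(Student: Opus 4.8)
The plan is to reduce the monotonicity of the two quotients to a single sign computation handled by Cauchy's Mean Value Theorem. First I would record a preliminary observation: since $g'$ is nonvanishing on $(a,b)$ and derivatives enjoy the intermediate value property (Darboux's theorem), $g'$ cannot change sign and therefore has constant sign on $(a,b)$; note this needs no continuity assumption on $g'$. I would then treat the case $g'>0$ in full (both monotonicity directions), and afterward reduce the case $g'<0$ to it by applying the $g'>0$ conclusion to the pair $(f,-g)$: this sends $f'/g'$ increasing to $f'/(-g)'$ decreasing and sends $(f(x)-f(a))/(g(x)-g(a))$ to its negative, so ``$F$ increasing'' is recovered from ``$-F$ decreasing.'' Under $g'>0$ the function $g$ is strictly increasing, so $g(x)-g(a)>0$ and $g(x)-g(b)<0$ for $x\in(a,b)$; in particular both denominators are nonvanishing, so both quotients are differentiable there.

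For the first quotient write $F(x)=\frac{f(x)-f(a)}{g(x)-g(a)}$ and set $\phi=f'/g'$. Differentiating, the sign of $F'(x)$ equals the sign of $f'(x)[g(x)-g(a)]-[f(x)-f(a)]g'(x)$; dividing this numerator by the positive quantity $g'(x)$ reduces the question to the sign of $\phi(x)[g(x)-g(a)]-[f(x)-f(a)]$. The crucial step is to apply Cauchy's Mean Value Theorem on $[a,x]$: there is a point $c\in(a,x)$ with $f(x)-f(a)=\phi(c)[g(x)-g(a)]$. Substituting, the reduced numerator becomes $[\phi(x)-\phi(c)][g(x)-g(a)]$. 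Since $c<x$ and $\phi$ is increasing we have $\phi(x)\ge\phi(c)$, while $g(x)-g(a)>0$, so the product is $\ge 0$; hence $F'\ge 0$ and $F$ is increasing. If $\phi$ is \emph{strictly} increasing then $\phi(x)>\phi(c)$ and the product is strictly positive, giving strict monotonicity.

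For the second quotient $\widetilde F(x)=\frac{f(x)-f(b)}{g(x)-g(b)}$ I would run the identical computation but apply Cauchy's Mean Value Theorem on $[x,b]$, obtaining $c\in(x,b)$ with $f(x)-f(b)=\phi(c)[g(x)-g(b)]$; the reduced numerator is again $[\phi(x)-\phi(c)][g(x)-g(b)]$. Here both factors reverse sign relative to the previous case: now $g(x)-g(b)<0$, while $c>x$ forces $\phi(x)\le\phi(c)$, so the two factors are both nonpositive and their product is still $\ge 0$, again yielding that $\widetilde F$ is increasing (strictly, under strict monotonicity of $\phi$). The decreasing hypothesis is handled by reversing all inequalities verbatim.

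The only genuinely delicate points are the sign bookkeeping just described—that the reversal of $g(x)-g(b)$ is exactly compensated by the reversed position of the Cauchy point $c$—and the initial reduction that $g'$ has constant sign, which rests on Darboux's theorem rather than on any continuity of $g'$. Everything else is a routine quotient-rule differentiation, so I expect no serious obstacle beyond keeping the signs consistent across the two endpoint cases.
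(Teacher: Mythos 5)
Your proof is correct, but there is nothing in the paper to compare it against: the paper does not prove this lemma at all, it imports it verbatim, with citation, as \cite[Theorem 2]{avv1} (the well-known l'H\^opital monotone rule of Anderson--Vamanamurthy--Vuorinen), and uses it as a black box in Lemmas \ref{lemma5} and \ref{lemma2}. Your Cauchy mean value theorem argument is essentially the standard proof found in that cited source: the quotient-rule differentiation reduces everything to the sign of $\phi(x)\bigl[g(x)-g(a)\bigr]-\bigl[f(x)-f(a)\bigr]$ with $\phi=f'/g'$, and the Cauchy MVT turns this into $\bigl[\phi(x)-\phi(c)\bigr]\bigl[g(x)-g(a)\bigr]$. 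All the genuinely delicate points are handled correctly: the Darboux-theorem observation that $g'$ has constant sign (no continuity of $g'$ is assumed, so this step is genuinely needed), the reflection $(f,-g)$ reducing $g'<0$ to $g'>0$, the nonvanishing of both denominators on $(a,b)$, and---for the right-endpoint quotient---the compensation between $g(x)-g(b)<0$ and the Cauchy point $c$ lying to the right of $x$, which keeps the product nonnegative. The strict case also goes through cleanly, since $c\neq x$ forces a strict inequality $\phi(c)\neq\phi(x)$ under strict monotonicity of $\phi$, hence a derivative of constant strict sign for each quotient.
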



\begin{lemma}\label{lemma5} The following function
$$h(x)=\frac{\log(x/\sin(x))}{\log(e^{1-x/\tan(x)}\sin(x)/x)}$$
is strictly decreasing from $(0,\pi/2)$ onto $(\beta_2,1)$, where
$\beta_2=\log(\pi/2)/\log(2e/\pi)\approx 0.8234.$ In particular, 
for $x\in(0,\pi/2)$ we have
\begin{equation}\label{2102a}
\left(\frac{e^{1-x/\tan(x)}\sin(x)}{x}\right)^{\beta_2}<\frac{x}{\sin(x)}<\left(\frac{e^{1-x/\tan(x)}\sin(x)}{x}\right).
\end{equation}
\end{lemma}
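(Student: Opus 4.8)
The plan is to exhibit $h$ as a quotient $h=f/g$ of two functions that both vanish at the origin and to run the monotone form of l'Hôpital's rule (Lemma \ref{lem0}) with left endpoint $0$. Put
$$f(x)=\log\frac{x}{\sin x},\qquad g(x)=\log\left(\frac{e^{1-x/\tan x}\sin x}{x}\right)=1-x\cot x-\log\frac{x}{\sin x},$$
using $x/\tan x=x\cot x$. Since $x\cot x\to1$ and $x/\sin x\to1$ as $x\to0^+$, both $f$ and $g$ extend continuously to $0$ with $f(0)=g(0)=0$, so $h=(f-f(0))/(g-g(0))$. By Lemma \ref{lem0} it therefore suffices to show that $g'$ does not vanish on $(0,\pi/2)$ and that $f'/g'$ is strictly decreasing there.

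A direct computation gives $f'(x)=\tfrac1x-\cot x$ and $g'(x)=\tfrac{x}{\sin^2 x}-\tfrac1x$. The crux of the argument is to read off their power series from the expansions supplied in the paper. Writing $a_n=\frac{2^{2n}}{(2n)!}|B_{2n}|>0$, formula \eqref{cot} yields
$$f'(x)=\frac1x-\cot x=\sum_{n=1}^{\infty}a_n\,x^{2n-1},$$
while \eqref{cosec}, after multiplying by $x$ and subtracting $1/x$, yields
$$g'(x)=\frac{x}{\sin^2 x}-\frac1x=\sum_{n=1}^{\infty}(2n-1)\,a_n\,x^{2n-1}.$$
In particular every coefficient of $g'$ is positive, so $g'(x)>0$ on $(0,\pi/2)$ (equivalently $x^2>\sin^2 x$); hence $g$ is strictly increasing with $g>0$ there, and $h$ is well defined.

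Cancelling a common factor $x$ and substituting $t=x^2$ gives
$$\frac{f'(x)}{g'(x)}=\frac{\sum_{m\geq 0}a_{m+1}\,t^{m}}{\sum_{m\geq 0}(2m+1)\,a_{m+1}\,t^{m}}.$$
The ratio of the $m$th coefficients equals $1/(2m+1)$, which is strictly decreasing in $m$, and the denominator coefficients $(2m+1)a_{m+1}$ are positive; so the Biernacki--Krzy\.z lemma (Lemma \ref{lembk}) shows this quotient is strictly decreasing in $t$. Since $t=x^2$ is increasing on $(0,\pi/2)$, the ratio $f'/g'$ is strictly decreasing in $x$, and Lemma \ref{lem0} then forces $h$ to be strictly decreasing on $(0,\pi/2)$.

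It remains to identify the endpoint values and translate the result. The leading series terms give $f(x)\sim g(x)\sim x^2/6$ as $x\to0^+$, whence $h(0^+)=1$; and as $x\to(\pi/2)^-$ one has $\sin x\to1$ and $x\cot x\to0$, so $f\to\log(\pi/2)$ and $g\to\log(2e/\pi)$, whence $h((\pi/2)^-)=\beta_2$. Thus $h$ maps $(0,\pi/2)$ strictly decreasingly onto $(\beta_2,1)$. Finally, because $g(x)>0$, the bound $\beta_2<h(x)<1$ is equivalent to $\beta_2\,g(x)<f(x)<g(x)$, and exponentiating this chain produces exactly \eqref{2102a}. I expect the only point requiring care to be the clean identification of the coefficient $(2n-1)a_n$ for $g'$ from \eqref{cosec}, since it is precisely this form that makes the coefficient ratio monotone and lets the Biernacki--Krzy\.z lemma apply; the rest is routine.
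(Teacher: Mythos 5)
Your proposal is correct and follows essentially the same route as the paper: write $h$ as a quotient of two functions vanishing at $0$, expand the derivatives via the Bernoulli-number series \eqref{cot} and \eqref{cosec}, observe that the coefficient ratio $1/(2n-1)$ is decreasing so that Lemma \ref{lembk} gives monotonicity of $f'/g'$, invoke Lemma \ref{lem0}, and compute the endpoint limits. Your write-up is in fact slightly more careful than the paper's (you verify $g'>0$ and $g>0$, make the substitution $t=x^2$ explicit so Lemma \ref{lembk} literally applies, and your series coefficients are stated correctly, whereas the paper carries a harmless extraneous factor $2n$), but the argument is the same.
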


\begin{proof} Let 
$$h(x)=\frac{h_1(x)}{h_2(x)}=\frac{\log(x/\sin(x))}{\log(e^{1-x/\tan(x)\sin(x)/x})},$$
for $x\in(0,\pi/2)$. Differentiating with respect to $x$, we get
$$\frac{h'_1(x)}{h'_2(x)}=\frac{1-x/\tan(x)}{(x/\sin(x))^2-1}=
\frac{A_1(x)}{B_1(x)}.$$
Using the expansion formula 
we have
$$A_1(x)=\sum_{n=1}^\infty\frac{2^{2n}2n}{(2n)!}|B_{2n}|x^{2n}=\sum_{n=1}^\infty a_nx^{2n}$$
and 
$$B_1(x)=\sum_{n=1}^\infty\frac{2^{2n}2n}{(2n)!}|B_{2n}|(2n-1)x^{2n}=\sum_{n=1}^\infty b_nx^{2n}.$$
Let $c_n=a_n/b_n=1/(2n-1)$, which is the decreasing in $n\in\mathbb{N}$. Thus, by Lemma \ref{lembk} $h'_1(x)/h'_2(x)$
is strictly decreasing in $x\in(0,\pi/2)$. In turn, this implies by Lemma 
\ref{lem0} that $h(x)$ is strictly decreasing in $x\in(0,\pi/2)$. Applying l'H\^opital rule, we get 
$\lim_{x\to 0}h(x)=1$ 
and $\lim_{x\to \pi/2}h(x)=\beta_2$. This completes the proof.
\end{proof}

\begin{remark}\rm It is observed that the inequalities in \eqref{2102a} coincide with the trigonometric inequalities given in \eqref{0209f}. Here Lemma \ref{lemma5} gives a new and an optimal proof for these inequalities.
\end{remark}


\begin{lemma}\label{lemma2} The following function
$$f(x)=\frac{1-e^{x/\tan(x)-1}}{1-\cos(x)}$$
is strictly decreasing from $(0,\pi/2)$ onto 
$((e-1)/e,2/3)$ where $(e-1)/e\approx 0.6321$.
In particular, for $x\in(0,\pi/2)$, we have
$$\frac{1}{\log(1+(e-1)\cos(x))}<\frac{\tan(x)}{x}<
\frac{1}{1+\log((1+2\cos(x))/3)}.$$
\end{lemma}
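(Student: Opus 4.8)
The plan is to establish the monotonicity of $f$ by the same machinery used in Lemma \ref{lemma5}, namely the monotone l'H\^opital rule (Lemma \ref{lem0}) together with the Biernacki--Krzy\.z criterion (Lemma \ref{lembk}). Writing $f=f_1/f_2$ with $f_1(x)=1-e^{x/\tan(x)-1}$ and $f_2(x)=1-\cos(x)$, both numerator and denominator vanish at $x=0$, so I first compute the derivative ratio
$$\frac{f_1'(x)}{f_2'(x)}=\frac{-e^{x/\tan(x)-1}\,(x/\tan(x)-1)'}{\sin(x)}.$$
Using $(x\cot x)'=\cot x - x\csc^2 x$ (which via \eqref{xcot} and \eqref{cosec} has an explicit power-series form), the factor $(x/\tan(x)-1)'$ is negative on $(0,\pi/2)$, so $f_1'/f_2'>0$; the real work is to show this ratio is \emph{decreasing}. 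The exponential factor $e^{x/\tan(x)-1}=X/A$ is itself decreasing, so the plan is to argue that each factor in $f_1'/f_2'$ is positive and decreasing and that the product therefore decreases.

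I expect the cleanest route is to avoid differentiating the product directly and instead reduce to a ratio of power series to which Lemma \ref{lembk} applies. Concretely, I would rewrite
$$\frac{f_1'(x)}{f_2'(x)}=e^{x/\tan(x)-1}\cdot\frac{x\csc^2(x)-\cot(x)}{\sin(x)},$$
and treat the two factors separately: the first is $X/A$, already known to be decreasing on $(0,\pi/2)$ by Lemma \ref{lemma1} together with the monotonicity arguments in the paper; for the second factor I would expand $x\csc^2 x-\cot x$ and $\sin x$ using \eqref{cot}, \eqref{cosec}, and the sine series, so that the quotient becomes $\sum a_n x^{2n+1}/\sum b_n x^{2n+1}$ with a monotone coefficient ratio $a_n/b_n$, whence Lemma \ref{lembk} gives monotonicity and the product of two positive decreasing functions is decreasing.

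Once $f_1'/f_2'$ is shown decreasing, Lemma \ref{lem0} yields that $f=(f_1-f_1(0^+))/(f_2-f_2(0^+))$ is strictly decreasing on $(0,\pi/2)$, since $f_1(0^+)=f_2(0^+)=0$. The endpoint values follow by l'H\^opital's rule: as $x\to 0^+$ a second-order expansion of $x/\tan(x)-1\sim -x^2/3$ and $1-\cos x\sim x^2/2$ gives the limit $(x^2/3)/(x^2/2)=2/3$, while at $x\to(\pi/2)^-$ one has $x/\tan(x)\to 0$ and $\cos x\to 0$, so $f\to(1-e^{-1})/1=(e-1)/e$. The stated two-sided inequality then follows by clearing the monotone bounds $\beta_2'=(e-1)/e<f(x)<2/3$ and solving the resulting relations for $\tan(x)/x$, rearranging $f(x)<2/3$ into $\tan(x)/x<1/(1+\log((1+2\cos x)/3))$ and $f(x)>(e-1)/e$ into $\tan(x)/x>1/\log(1+(e-1)\cos x)$.

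The main obstacle will be verifying that the coefficient ratio $a_n/b_n$ arising from the second factor is genuinely monotone; unlike the clean $1/(2n-1)$ pattern in Lemma \ref{lemma5}, here the series for $x\csc^2 x-\cot x$ mixes Bernoulli-number coefficients in a way that may not simplify to a single-index closed form, so I may instead need to bound the product's logarithmic derivative directly or split off the already-known monotone factor $X/A$ and apply Lemma \ref{lembk} only to the remaining quotient.
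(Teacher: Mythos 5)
Your overall frame---writing $f=f_1/f_2$ with $f_1(0^+)=f_2(0^+)=0$, showing $f_1'/f_2'$ is decreasing, invoking Lemma \ref{lem0}, computing the endpoint limits by expansion, and rearranging the bounds $(e-1)/e<f(x)<2/3$ into the stated double inequality---matches the paper, and your limits and final rearrangements are correct. But the core step of your plan fails. You propose to factor
$$\frac{f_1'(x)}{f_2'(x)}=e^{x\cot x-1}\cdot\frac{x\csc^2 x-\cot x}{\sin x}$$
and argue that each factor is positive and decreasing. The second factor equals $(x-\sin x\cos x)/\sin^3 x$, and it is \emph{not} decreasing: its Maclaurin expansion is $\tfrac{2}{3}+\tfrac{x^2}{5}+O(x^4)$, so it increases near $0$, and it increases overall from $2/3$ at $0^+$ to $\pi/2$ at $(\pi/2)^-$. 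The monotonicity of $f_1'/f_2'$ is therefore a genuine competition between a decreasing exponential factor and an increasing algebraic factor, and no product-of-monotone-functions argument can decide it. (A secondary technical obstruction: Lemma \ref{lembk} cannot be applied to your quotient in any case, because the denominator $\sin x$ has Maclaurin coefficients of alternating sign, while the lemma requires $c_n>0$ for all $n$.)

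The missing ingredient---which your closing sentence gestures at (``bound the product's logarithmic derivative directly'') but never executes---is precisely what the paper does: differentiate $f_3:=f_1'/f_2'$ as a whole, obtaining
$$f_3'(x)=-\frac{e^{x\cot x-1}}{\sin^3 x}\,\bigl(c(x)-2\bigr),\qquad c(x)=x\left(\cot x+\frac{x}{\sin^2 x}\right),$$
so that everything reduces to the inequality $c(x)>2$, equivalently $\dfrac{\sin x}{x}<\dfrac{x+\sin x\cos x}{2\sin x}$. The paper settles this with the Cusa--Huygens inequality $\sin x/x<(\cos x+2)/3$: the intermediate bound $(\cos x+2)/3\le (x+\sin x\cos x)/(2\sin x)$ clears to $\sin x\,(4-\cos x)\le 3x$, which follows from a second application of Cusa--Huygens together with $(\cos x+2)(4-\cos x)\le 9$, i.e.\ $(\cos x-1)^2\ge 0$. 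Without this computation (or an equivalent estimate showing that the decreasing exponential factor dominates the increasing factor), your proposal does not prove the lemma.
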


\begin{proof} Write $f(x)=f_1(x)/f_2(x)$, where 
$f_1(x)=1-e^{x/\tan(x)-1}$ and $f_2(x)=1-\cos(x)$ for all $x\in(0\pi/2)$. Clearly,
$f_1(x)=0=f_2(x)$. Differentiating with respect to $x$, we get
$$\frac{f'_1(x)}{f'_2(x)}=\frac{e^{x/\tan(x)-1}}{\sin(x)^3}\left(\frac{x}{\sin(x)^2}-\frac{\cos(x)}{\sin(x)}\right)
=f_3(x).$$ Again
$$f'_3(x)=-\frac{e^{x/\tan(x)-1}}{\sin(x)^3}\left(c(x)-2\right),$$ 
where
$$c(x)=x
\left(\frac{\cos(x)}{\sin(x)}+\frac{x}{\sin(x)^2}\right).$$
In order to show that $f'_3<0$, it is enough to prove that 
$$c(x)>2,$$
which is equivalent to 
$$\frac{\sin(x)}{x}<\frac{x+\sin(x)\cos(x)}{2\sin(x)}.$$ Applying the Cusa-Huygens inequality
$$\frac{\sin(x)}{x}<\frac{\cos(x)+2}{3},$$ we get
$$\frac{\cos(x)+2}{3}<\frac{x+\sin(x)\cos(x)}{2\sin(x)},$$
which is equivalent to $(\cos(x)-1)^2>0$. Thus $f'_3 >0$, clearly $f'_1/f'_2$ is strictly decreasing in $x\in(0,\pi/2)$. By Lemma \ref{lem0}, we conclude that the function $f(x)$ is strictly decreasing in 
$x\in(0,\pi/2)$. The limiting values follows easily. This completes the proof of the lemma.  
\end{proof}


\begin{lemma}\label{lemma3} The following function
$$f_4(x)=
\frac{\sin (x)}{x \left(\cos (x)-e^{x \cot (x)-1}+1\right)}$$
is strictly increasing from $(0,\pi/2)$ onto $(1,c)$, where
$c=2e/(\pi(e-1))\approx 1.0071$. In particular, for $x\in(0,\pi/2)$ we have
$$1+\cos(x)-e^{x/\tan(x)-1}<\frac{\sin(x)}{x}<c(1+\cos(x)-e^{x/\tan(x)-1}).$$
\end{lemma}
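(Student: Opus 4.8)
The plan is to show that the function
$$f_4(x)=\frac{\sin(x)}{x\bigl(\cos(x)-e^{x\cot(x)-1}+1\bigr)}$$
is strictly increasing on $(0,\pi/2)$ by relating it to the previously analyzed function $f(x)$ of Lemma~\ref{lemma2}. Indeed, a direct computation shows that
$$f_4(x)=\frac{1}{\dfrac{x\bigl(1+\cos(x)-e^{x\cot(x)-1}\bigr)}{\sin(x)}}
=\frac{1}{\dfrac{x}{\sin(x)}\bigl(1-e^{x\cot(x)-1}\bigr)+\dfrac{x(1+\cos(x))}{\sin(x)}-\dfrac{x}{\sin(x)}}.$$
Rather than expand in this form, the cleaner route is to observe that
$$\frac{1}{f_4(x)}=\frac{x\bigl(1+\cos(x)-e^{x\cot(x)-1}\bigr)}{\sin(x)}
=\frac{x}{\sin(x)}\cdot(1-\cos(x))\cdot\frac{1-e^{x\cot(x)-1}}{1-\cos(x)}+\frac{2x\cos(x)}{\sin(x)},$$
so that $1/f_4$ is built from $\tan(x)/x$, $(1-\cos x)$, and the quotient $f(x)$ of Lemma~\ref{lemma2}. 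Since establishing monotonicity of $f_4$ directly through this decomposition is delicate, I would instead differentiate $f_4$ and reduce the sign of $f_4'$ to a trigonometric inequality.

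The concrete steps I would carry out are as follows. First I would write $f_4=g_1/g_2$ with $g_1(x)=\sin(x)$ and $g_2(x)=x\bigl(1+\cos(x)-e^{x\cot(x)-1}\bigr)$, noting $g_1(0^+)=0$ and $g_2(0^+)=0$, so that the Monotone L'Hôpital Rule (Lemma~\ref{lem0}) applies once I verify $g_2'\neq 0$. Next I would compute the derivative ratio $g_1'/g_2'$ and aim to show it is increasing; using the relation $(e^{x\cot(x)-1})'=e^{x\cot(x)-1}\bigl(\cot(x)-x/\sin^2(x)\bigr)$ from the calculation already performed in the proof of Lemma~\ref{lemma2}, the exponential terms can be grouped so that the sign condition becomes a comparison between elementary trigonometric expressions. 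The key simplification is that the factor $x/\sin^2(x)-\cot(x)$, which was shown in Lemma~\ref{lemma2} to be positive and to control $f_3(x)=f_1'/f_2'$, reappears here and can be reused.

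The main obstacle is the presence of the term $e^{x\cot(x)-1}$ inside $g_2$, which prevents a clean power-series treatment and forces the analysis through explicit differentiation. I expect that after differentiating and clearing the common positive factor $e^{x\cot(x)-1}/\sin^3(x)$, the sign of $f_4'$ reduces to an inequality of the Cusa--Huygens type, namely a comparison $\sin(x)/x$ versus $(\cos(x)+2)/3$, exactly as in Lemma~\ref{lemma2}; the final reduction should again collapse to something of the form $(\cos(x)-1)^2>0$ or a closely related manifestly nonnegative quantity. Once monotonicity is established, the endpoint values follow from l'Hôpital: $\lim_{x\to 0^+}f_4(x)=1$ by a Taylor expansion of numerator and denominator, and $\lim_{x\to\pi/2^-}f_4(x)=2e/(\pi(e-1))=c$ by direct substitution, using $\cos(\pi/2)=0$ and $x\cot(x)\to 0$. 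The stated double inequality for $\sin(x)/x$ then follows immediately by inverting the bounds $1<f_4(x)<c$ and multiplying through by the positive quantity $x\bigl(1+\cos(x)-e^{x\cot(x)-1}\bigr)$, after checking this quantity is positive on $(0,\pi/2)$.
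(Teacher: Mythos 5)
Your plan has two genuine gaps, and the first is fatal as stated. The Monotone L'H\^opital route with $g_1(x)=\sin(x)$ and $g_2(x)=x\bigl(1+\cos(x)-e^{x\cot(x)-1}\bigr)$ cannot work: Lemma \ref{lem0} requires $g_2'\neq 0$ on the whole interval, but $g_2'$ changes sign inside $(0,\pi/2)$. Writing $D(x)=1+\cos(x)-e^{x\cot(x)-1}$, one has $g_2'=D+xD'$, with $g_2'(0^+)=1>0$, while
$$g_2'(\pi/2)=\Bigl(1-\frac{1}{e}\Bigr)+\frac{\pi}{2}\Bigl(-1+\frac{\pi}{2e}\Bigr)
=1-\frac{1}{e}-\frac{\pi}{2}+\frac{\pi^2}{4e}\approx -0.031<0,$$
so $g_2'$ vanishes somewhere in the interval, $g_1'/g_2'$ has a singularity there, and it is certainly not monotone on $(0,\pi/2)$. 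Your second mechanism is also incorrect: the exponential is \emph{not} a common factor of the numerator of $f_4'$, so it cannot be ``cleared.'' A direct computation shows that the numerator of $f_4'$ factors (up to positive constants) as
$$\bigl(x-\sin(x)\bigr)\left[\,1+\cos(x)-e^{x\cot(x)-1}\,\frac{x+\sin(x)}{\sin(x)}\right],$$
and the exponential multiplies only one of the two terms in the bracket; the sign question therefore remains a mixed exponential--trigonometric inequality, which is exactly the hard part of the lemma and is never resolved in your proposal.

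The missing idea, which is how the paper closes this gap, is a second, logarithmic differentiation: set
$$f_5(x)=\log\!\left(\frac{(x+\sin(x))\,e^{x\cot(x)}}{\sin(x)}\right)-\log\bigl(e\cos(x)+e\bigr),$$
note $f_5(0^+)=0$, and compute
$$f_5'(x)=\frac{2-x\bigl(\cot(x)+x\csc^2(x)\bigr)}{x+\sin(x)},$$
which is negative precisely by the inequality $x\bigl(\cot(x)+x\csc^2(x)\bigr)>2$ established (via Cusa--Huygens) in the proof of Lemma \ref{lemma2}. Hence $f_5<0$, the bracket above is positive, and $f_4'>0$ since $x>\sin(x)$. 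So your instinct that Lemma \ref{lemma2}'s machinery and Cusa--Huygens are the ultimate source of positivity is right, but they enter only after this logarithmic step that eliminates the exponential; without it the reduction to $(\cos(x)-1)^2>0$ you anticipate does not materialize. Your endpoint computations ($f_4\to 1$ at $0$, $f_4\to 2e/(\pi(e-1))$ at $\pi/2$) and the passage from $1<f_4<c$ to the stated double inequality (using $D>\cos(x)>0$) are fine.
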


\begin{proof} Differentiating with respect to $x$ we get
$$f'_4(x)=\frac{e (x-\sin (x)) \left(e \cos (x)-(x+\sin (x)) e^{x \cot (x)} \csc
   (x)+e\right)}{x^2 \left(e \cos (x)-e^{x \cot (x)}+e\right)^2}.$$
	Let
	$$f_5(x)=\log \left((x+\sin (x)) e^{x \cot (x)} /\sin (x)\right)-\log (e \cos (x)+e),$$
	for $x\in(0,\pi/2)$. Differentiation yields 
	$$f'_5(x)=\frac{2-x \left(\cot (x)+x \csc ^2(x)\right)}{x+\sin (x)},$$
	which is negative by the proof of Lemma \ref{lemma2}, and $\lim_{x\to 0}f_5(x)=0$. This implies that $f'_4(x)>0$, and $f_4(x)$
	is strictly increasing. The limiting values follows easily. This implies the proof.
\end{proof}


\begin{lemma} For $a\neq b$, one has
\begin{equation}\label{2602a0}
M_{1/3}<(2G+A)/3.
\end{equation}
\end{lemma}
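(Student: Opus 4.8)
The plan is to exploit the homogeneity of all the means involved to reduce the inequality to a single-variable statement, and then to collapse it to an elementary algebraic factorization by means of the triple-angle identity for $\cosh$.

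First I would normalize. Since $M_{1/3}$, $G$ and $A$ are all homogeneous of degree one, I may assume without loss of generality that $a>b$ and $G=\sqrt{ab}=1$; writing $a=e^{s}$, $b=e^{-s}$ with $s>0$ then forces $ab=1$ as required. Under this substitution $A=(e^{s}+e^{-s})/2=\cosh s$ and $M_{1/3}=\bigl((e^{s/3}+e^{-s/3})/2\bigr)^{3}=\cosh^{3}(s/3)$, so the claim $M_{1/3}<(2G+A)/3$ becomes
\[
\cosh^{3}\!\Bigl(\frac{s}{3}\Bigr)<\frac{2+\cosh s}{3},\qquad s>0.
\]

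The key step is to set $y=s/3>0$ and apply the triple-angle identity $\cosh(3y)=4\cosh^{3}y-3\cosh y$ to the right-hand side. After clearing the denominator the inequality reduces to
\[
\cosh^{3}y-3\cosh y+2>0,
\]
and writing $w=\cosh y$ (so that $w>1$ for $y>0$) this is nothing but $w^{3}-3w+2=(w-1)^{2}(w+2)>0$, which holds because both factors are positive. Since every manipulation above is an equivalence, reversing the steps yields the strict inequality for $a\neq b$.

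There is no serious obstacle here: the only idea needed is choosing the $G=1$, $a=e^{\pm s}$ normalization, which simultaneously linearizes $A$ and $M_{1/3}$ in terms of $\cosh$ and lets the cube-root mean be handled by the triple-angle formula. Equivalently, normalizing instead by $a^{1/3}+b^{1/3}=2$ and setting $a^{1/3}=1+d$, $b^{1/3}=1-d$ with $u=d^{2}\in(0,1)$ reduces the claim to $(1-u)^{3/2}>1-\tfrac{3}{2}u$, which is immediate from Bernoulli's inequality for the exponent $3/2>1$; I would keep the $\cosh$ route as the main argument because of the transparent factorization it produces.
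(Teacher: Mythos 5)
Your proof is correct, but it travels a different road than the paper's. The paper works directly with the ratio: it divides by $b$, sets $x=a/b=t^{6}$ with $t>1$, clears denominators to get the sextic inequality $t^{6}-9t^{4}+16t^{3}-9t^{2}+1>0$, and finishes by the factorization $(t-1)^{4}(t^{2}+4t+1)>0$. Your hyperbolic normalization $a=e^{s}$, $b=e^{-s}$ exploits the symmetry $a\leftrightarrow b$ (equivalently $t\leftrightarrow 1/t$) to halve the degree: with $w=\cosh(s/3)=\tfrac{1}{2}\left(t+t^{-1}\right)$, where $t=(a/b)^{1/6}$, one has
\begin{equation*}
(w-1)^{2}(w+2)=\frac{(t-1)^{4}\left(t^{2}+4t+1\right)}{8t^{3}},
\end{equation*}
so your cubic factorization is precisely the paper's sextic one compressed by the symmetric substitution. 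What each approach buys: the paper's argument is bare-hands algebra requiring no identities beyond expanding and factoring, while yours needs the triple-angle formula $\cosh(3y)=4\cosh^{3}y-3\cosh y$ but rewards you with a lower-degree, more transparent factorization $(w-1)^{2}(w+2)>0$; your alternative normalization $a^{1/3}+b^{1/3}=2$ reducing the claim to Bernoulli's inequality $(1-u)^{3/2}>1-\tfrac{3}{2}u$ is shorter still and would also be a perfectly acceptable proof. Strictness is handled correctly in your argument, since $s>0$ gives $w>1$ and hence $(w-1)^{2}>0$.
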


\begin{proof} Let $G=G(a,b)$, etc. Divide both sides with $b$ and put $a/b=x$. Then inequality 
\eqref{2602a0} becomes the following:
\begin{equation}\label{2602a}
\left(\frac{x^{1/3}+1}{2}\right)^3 < 4(x+4\sqrt{x}+1). 
\end{equation}   
Let $x=t^6$, where  $t>1$. Then raising both sides of \eqref{2602a} to $3$th power, after elementary transformations we get, 
   $$t^6-9t^4+16t^3-9t^2+1>0,$$
which can be written as $(t-1)^4(t^2+4t+1)>0$, so it is true. Thus \eqref{2602a} and \eqref{2602a0} are proved.
\end{proof}

Since $L< M_{1/3}$, by \eqref{2602a0} we get a new proof , as well as a refinement of Carlson's inequality  $L< (2G+A)/3$.


\begin{lemma} For $a\neq b$, one has
\begin{equation}\label{2602b0}
H_{1/2}<(2G+A)/3.
\end{equation}
\end{lemma}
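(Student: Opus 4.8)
The plan is to mimic exactly the strategy used in the immediately preceding lemma, namely the proof of $M_{1/3}<(2G+A)/3$. The statement is homogeneous in $a$ and $b$, so first I would normalize by dividing through by $b$ and setting $x=a/b$, which reduces the inequality $H_{1/2}<(2G+A)/3$ to a single-variable inequality. Recalling that the power-type Heronian mean at $p=1/2$ is $H_{1/2}=\bigl((\sqrt a+(ab)^{1/4}+\sqrt b)/3\bigr)^{2}$ and $G=\sqrt{ab}$, the normalized inequality becomes, after writing everything in terms of $\sqrt x$,
$$\left(\frac{\sqrt{x}+x^{1/4}+1}{3}\right)^{2}<\frac{2\sqrt{x}+\frac{x+1}{2}}{3}.$$

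Next I would introduce the substitution $x=t^{4}$ with $t>1$, which rationalizes all the fractional exponents and turns both sides into polynomials in $t$. Clearing denominators (multiplying by $9$ on the left and by $3$ on the right and cross-multiplying) yields a polynomial inequality of the form $Q(t)>0$ for $t>1$. Following the pattern of the previous lemma, I expect that after elementary algebraic simplification $Q(t)$ will factor as a perfect square of $(t-1)$ times a polynomial with manifestly positive coefficients for $t>0$; since the case $a=b$ (i.e.\ $t=1$) gives equality, the factor $(t-1)^{2}$ (or a higher even power) must appear. The key step is therefore to carry out this factorization explicitly and exhibit the complementary factor as a sum of terms that are positive for all $t>1$.

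The main obstacle will be the bookkeeping in the factorization: because $H_{1/2}$ is a squared quantity, clearing the square on the left against the linear right-hand side produces a polynomial of moderately high degree in $t$, and one must verify that it indeed splits off the expected $(t-1)^{2}$ factor cleanly and that the remaining cofactor has no positive real roots. Should the cofactor fail to have all-positive coefficients, I would instead argue its positivity on $(1,\infty)$ by grouping terms or by a further substitution $t=1+s$ with $s>0$, expanding in $s$, and checking that every coefficient of the resulting polynomial in $s$ is nonnegative. Once $Q(t)>0$ is established for $t>1$, the chain of equivalences reverses to give \eqref{2602b0}, completing the proof.
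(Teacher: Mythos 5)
Your proposal is correct, but it follows a genuinely different route from the paper. You mimic the preceding lemma: normalize by $b$, set $x=a/b=t^4$, and reduce to a polynomial inequality. Carrying out your computation confirms your expectation in the cleanest possible way: the inequality becomes $2(t^2+t+1)^2<3(t^4+4t^2+1)$, i.e.
\begin{equation*}
t^4-4t^3+6t^2-4t+1=(t-1)^4>0,
\end{equation*}
so the polynomial is exactly a fourth power of $(t-1)$ and no cofactor analysis or $t=1+s$ expansion is needed. The paper instead stays in the language of means: it uses the identity $\sqrt{a}+\sqrt{b}=\sqrt{2(A+G)}$ to write $H_{1/2}=\bigl(\sqrt{2(A+G)}+\sqrt{G}\,\bigr)^2/9$, so the claim becomes $A+3G>2\sqrt{2G(A+G)}$, which after squaring is $(A-G)^2>0$. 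The two arguments are equally elementary; yours has the virtue of uniformity with the previous lemma (the same substitution machinery handles both $M_{1/3}$ and $H_{1/2}$), while the paper's identity-based reduction avoids any polynomial expansion and makes the equality case $A=G$ (i.e.\ $a=b$) visible immediately.
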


\begin{proof}
By definition of $H_\alpha$ one has  
$$H_{1/2}=((\sqrt{a}+ (ab)^{1/4}+\sqrt{b})/3)^2=(\sqrt{2(A+G)} +\sqrt{G})^2/9,$$
by remarking that $\sqrt{a}+\sqrt{b}=\sqrt{2(A+G)}$.
Therefore, (2) can be written equivalently as 
\begin{equation}\label{2602-7}
(2(A+G) +2\sqrt{2G(A+G)} +G)/9< (2G+A)/3.
\end{equation}
Now, it is immediate that \eqref{2602-7} becomes, after elementary computations
\begin{equation}\label{2602-8}
A+3G>2\sqrt{2G(A+G)},
\end{equation}
or by raising both sides to the $2$th power:
$$A^2+6AG+9G^2> 8AG+8G^2,$$
which become  $(A-G)^2>0$, true. Thus \eqref{2602-8} and \eqref{2602-7} are proved, and \eqref{2602b0} follows.
\end{proof}
\section{\bf Proof of main result}

\noindent{\bf Proof of Theorem \ref{thm1}.} It follows from Lemma \ref{lemma2} that
$$\frac{e-1}{e}<\frac{1-1/e^{1-x/\tan(x)}}{\cos(x)/e^{1-x/\tan(x)}-1/e^{1-x/\tan(x)}}<\frac{2}{3}.$$
Now we get the proof of \eqref{30ineqa} by utilizing the Lemma \ref{lemma1}.
The proof of \eqref{30ineqb}
follows easily from Lemmas \ref{lemma1} and \ref{lemma2}.
$\hfill\square$

\bigskip

%

\noindent{\bf Proof of Theorem \ref{2602thm0}.} The second inequality of \eqref{2402a} is right side of relation \eqref{89ineq}.
In \cite{alzer1}, Alzer and Qiu proved the third inequality of \eqref{2402a}.
The last inequality is the left side of \eqref{2602alzer}.
By \cite{chuet} and \cite{alzer1}, $q$ is best possible constant in both sides.

Now we shall prove the first inequality of \eqref{2402a}.
By using Lemma \ref{lemma1}, is is easy to see that, this becomes equivalent with   
$1+\cos (x)< e^{x\cot (x)}$, 
or
\begin{equation}\label{2602tri}
\log(1+\cos (x))< x\cot (x),\quad   x \in (0, \pi/2).
\end{equation}
Now, by the classical inequality  $\log(1+t)< t\;  (t>0)$, applied to $t=\cos (x)$, we get
$\log(1+\cos (x))< \cos (x)$. 
Now $\cos (x)< x\cot (x)  = x\cos (x)/\sin (x)$ is true by  $\sin (x) <x$.
The proof of \eqref{2602tri} follows.
$\hfill\square$

One has the following relation, in analogy with relation \eqref{2602alzer} of 
Theorem \ref{sandorbhayo-theorem} for the mean $Y$:

\begin{corollary} One has
$$(A+G)/e<X< (A+G)/2,$$ 
where the constants  e and 2 are best possible.
\end{corollary}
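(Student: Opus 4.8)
The plan is to obtain both inequalities directly from Theorem \ref{2602thm0} and then to certify the constants by a limiting computation. The left-hand inequality $(A+G)/e < X$ is precisely the first inequality of \eqref{2402a}, already proved, while the right-hand inequality $X < (A+G)/2$ is the end of the chain $X < M_q < (L+I)/2 < (A+G)/2$ in the same theorem. Thus no new inequality has to be established, and the entire content of the corollary beyond Theorem \ref{2602thm0} is the claim that the constants $e$ and $2$ are best possible.

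To handle the constants I would switch to the trigonometric parametrization of Lemma \ref{lemma1}. For $x \in (0,\pi/2)$ one has $G/A = \cos x$ and $X/A = e^{x\cot x - 1}$, whence
\[
\frac{A+G}{X} = (1+\cos x)\,e^{1 - x\cot x}.
\]
Writing $\phi(x)$ for the right-hand side, the two inequalities of the corollary are exactly the single two-sided bound $2 < \phi(x) < e$ on $(0,\pi/2)$, which we already know holds. To prove best possibility it therefore suffices to show that $2$ and $e$ are approached as limiting values of $\phi$.

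The decisive step is then the endpoint evaluation of $\phi$. As $x \to 0^+$ the standard limit $x\cot x \to 1$ together with $\cos x \to 1$ gives $\phi(x) \to 2\cdot e^{0} = 2$, corresponding to $a \to b$; as $x \to (\pi/2)^-$ the values $x\cot x \to 0$ and $\cos x \to 0$ give $\phi(x) \to 1\cdot e^{1} = e$, corresponding to $b/a \to 0$. Hence $(A+G)/X$ takes values arbitrarily close to $2$ from above and arbitrarily close to $e$ from below, so the constant $2$ in $X < (A+G)/2$ cannot be enlarged and the constant $e$ in $(A+G)/e < X$ cannot be reduced. Since the inequalities themselves are inherited from Theorem \ref{2602thm0}, there is no real obstacle here beyond carrying out these two limits carefully; in particular no monotonicity argument for $\phi$ is needed, because the two-sided bound $2 < \phi(x) < e$ is already in hand and only the sharpness of the endpoints remains.
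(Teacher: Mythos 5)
Your derivation of the two inequalities is exactly the paper's: in the paper this corollary carries no separate proof at all, being stated immediately after Theorem \ref{2602thm0} precisely because the left inequality is the first link of the chain \eqref{2402a} and the right inequality is the composition of the remaining links $X<M_q<(L+I)/2<(A+G)/2$. Where you go beyond the paper is the sharpness claim: the paper asserts that $e$ and $2$ are best possible but never verifies this. Your verification is correct and fills that gap. Using Lemma \ref{lemma1}, the function $\phi(x)=(A+G)/X=(1+\cos x)\,e^{1-x\cot x}$ satisfies $\phi(x)\to 2$ as $x\to 0^{+}$ (the case $a\to b$, since $x\cot x\to 1$ and $\cos x\to 1$) and $\phi(x)\to e$ as $x\to(\pi/2)^{-}$ (the case $b/a\to 0$, since $x\cot x\to 0$ and $\cos x\to 0$); combined with the already-known strict bound $2<\phi(x)<e$, this shows no constant larger than $2$ can replace $2$ on the right and no constant smaller than $e$ can replace $e$ on the left. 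So your proposal is not merely a rederivation: it supplies the limiting argument that the paper's statement of best possibility implicitly relies on but omits, at the modest cost of invoking the trigonometric parametrization a second time.
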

The inequalities  $(A+G)/e < X$ and 
$(2G+A)/3 <X$ are not comparable.

\bigskip

\noindent{\bf Proof of Theorem \ref{2602thm1}.} 
The second inequality of \eqref{2402ee} appeared in \cite{sandortwosharp}  in the form $P^2>AX$. The last inequality follows by $P< (2A+G)/3$. Indeed, one has  $((2A+G)/3)^2< A(A+G)/2$ becomes 
$2G^2< A^2+AG$, and this is true by $G<A$.
$\hfill\square$

\bigskip

\noindent{\bf Proof of Theorem \ref{2602thm2}.}
By \cite[Theorem 2.10]{sandornew}, one has $P+X> A+G$, and remarking that $(A+G)/2 =M_{1/2}$, the left side of \eqref{2402d} follows.
For the right side of \eqref{2402d}, we will use  $P< M_{t}$ with $t= 2/3$ 
(see  \cite{sandor1405}), and $X< M_q$ (\cite{chuet}), where $q=(\log 2)/(\log 2+1)$.
On the other hand the function $f(t) = M_t$  is known to be strictly $\log$-concave for $t>0$ 
(see \cite{sandorlog}). Particularly, this implies that $f(t)$ is strictly concave. Thus 
$(M_t+M_q)/2 < M_{(t+q)/2}$.     
As  $(t+q)/2 = k \approx 0.5380$, the result follows.
$\hfill\square$

\begin{corollary}\label{cor27} One has the followng two sets of inequalities:
\begin{enumerate}
\item $PX>PL> AG$,     

\item $IL >PL> AG$.  
\end{enumerate}
\end{corollary}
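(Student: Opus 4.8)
The plan is to notice that both displayed chains hinge on the single nontrivial estimate $PL>AG$, and to peel off the easy pieces first. For the left inequality of (1), dividing by $P>0$ turns $PX>PL$ into $X>L$, which is already recorded in Theorem~\ref{sandorbhayo-theorem} (the chain $L<(2G+A)/3<X$, see also \eqref{2402e}). For the left inequality of (2), dividing by $L>0$ turns $IL>PL$ into $I>P$, again available from Theorem~\ref{sandorbhayo-theorem} (there $P<(2A+G)/3<I$). So everything reduces to proving $PL>AG$.

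To treat $PL>AG$ I would pass to the parametrisation of Lemma~\ref{lemma1}. Writing $P/A=\sin x/x$ and $L/G=\sinh y/y$ for the associated $x\in(0,\pi/2)$ and $y>0$, we have
\[
\frac{PL}{AG}=\frac{P}{A}\cdot\frac{L}{G}=\frac{\sin x}{x}\cdot\frac{\sinh y}{y}.
\]
The key structural fact is that $x$ and $y$ are coupled: dividing the identity $L/A=\tanh y/y$ by $L/G=\sinh y/y$ gives $G/A=1/\cosh y$, while $G/A=\cos x$; hence $\cos x=G/A$ and $\cosh y=A/G$, so that $\cos x\,\cosh y=1$.

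Granting this, the inequality drops out of two classical ``cube'' estimates: the Adamovi\'c--Mitrinovi\'c inequality $(\sin x/x)^3>\cos x$ on $(0,\pi/2)$ and the Lazarevi\'c inequality $(\sinh y/y)^3>\cosh y$ for $y>0$. Multiplying them and invoking $\cos x\,\cosh y=1$ gives
\[
\left(\frac{PL}{AG}\right)^3=\left(\frac{\sin x}{x}\right)^3\left(\frac{\sinh y}{y}\right)^3>\cos x\,\cosh y=1,
\]
whence $PL>AG$ and the two chains follow. In pure mean language the two estimates read $P^3>A^2G$ and $L^3>AG^2$, whose product is exactly $(PL)^3>(AG)^3$.

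The main obstacle is the justification of the two cube inequalities rather than the surrounding bookkeeping. If one prefers not to cite the classical results, each is provable in the exact style of Lemmas~\ref{lemma5}--\ref{lemma3}: form the quotients $\log(\sin x/x)/\log\cos x$ and $\log(\sinh y/y)/\log\cosh y$ (both tending to $1/3$ at $0$), differentiate, and feed the resulting ratios of the Bernoulli-number power series \eqref{xcot}--\eqref{cosech} into the Biernacki--Krzy\.z criterion (Lemma~\ref{lembk}) together with the monotone l'H\^opital rule (Lemma~\ref{lem0}); the boundary value forces strictness. Note that no compatibility between $x$ and $y$ is needed for these two estimates---they hold pointwise on $(0,\pi/2)$ and on $(0,\infty)$---so the constraint $\cos x\,\cosh y=1$ enters only at the final multiplication, and one needs merely that the $x,y$ attached to a given $a\neq b$ lie in those ranges, which Lemma~\ref{lemma1} guarantees.
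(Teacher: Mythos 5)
Your proposal is correct, and its treatment of the two easy reductions coincides exactly with the paper's: the paper also disposes of $PX>PL$ via $X>L$ and of $IL>PL$ via $I>P$ (both from Theorem \ref{sandorbhayo-theorem}), and notes that the remaining inequality in both chains is the single estimate $PL>AG$. Where you genuinely diverge is on that key estimate: the paper does not prove it at all, but simply cites S\'andor's earlier work \cite{sandor1405}, whereas you give a self-contained derivation by factoring $PL>AG$ as the product of two classical pointwise inequalities, $P^3>A^2G$ (Adamovi\'c--Mitrinovi\'c, $(\sin x/x)^3>\cos x$) and $L^3>AG^2$ (Lazarevi\'c/Leach--Sholander, $(\sinh y/y)^3>\cosh y$), so that $(PL)^3>A^3G^3$. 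This is a valid and rather elegant decomposition; your coupling observation $\cos x\,\cosh y=1$ (from $\cos x=G/A$ and $\cosh y=A/G$ via Lemma \ref{lemma1}) is correct, though, as you yourself note, it is dispensable once the two cube inequalities are stated as mean inequalities, since they then multiply directly. What the paper's route buys is brevity and attribution to the original source; what yours buys is independence from the external reference and an explicit explanation of \emph{why} $PL>AG$ holds, in the same trigonometric/hyperbolic spirit as the paper's Section 3, with the cube inequalities themselves provable by the Biernacki--Krzy\.z and monotone l'H\^opital machinery (Lemmas \ref{lembk} and \ref{lem0}) exactly as you sketch.
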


\begin{proof} The first inequality of (1) follows by  $X>L$, while the second appears in 
\cite{sandor1405}.
The first inequality of (2) follows by $I>P$, while the second one is the same as the second one in (1).
\end{proof}

\begin{remark}\rm
Particularly in Corollary \ref{cor27}, (2) improves Alzer's inequality $IL>AG$.
Inequality (1) improves  $PX>AG$, which appears in \cite{sandornew}.
\end{remark}

\begin{corollary} One has
\begin{enumerate}
\item $X > A(P+G)/(3P-G)> (2G+A)/3> L.$
\item $P^2/A >X> (P+G)/2.$  
\end{enumerate}
\end{corollary}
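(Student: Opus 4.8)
The plan is to translate both chains of inequalities into statements that follow from results already established in the excerpt, using the trigonometric substitution of Lemma~\ref{lemma1} where a direct reduction is cleaner. I will treat the two sets separately.

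For set (1), I would argue from right to left along the chain $X > A(P+G)/(3P-G) > (2G+A)/3 > L$. The last inequality $(2G+A)/3 > L$ is exactly Carlson's inequality, which has already been reproved and refined via \eqref{2602a0} together with $L < M_{1/3}$, so I would simply cite that. For the middle inequality $A(P+G)/(3P-G) > (2G+A)/3$, since by Theorem~\ref{sandortheorem}(1) we have $P > X > G$ so that $3P - G > 0$, I can cross-multiply to obtain an equivalent polynomial inequality in $A$, $G$, $P$; I expect this to reduce to something of the form $3A(P+G) > (2G+A)(3P-G)$, i.e. after expansion a relation that should hold because $P$ lies between $G$ and $A$ in the appropriate sense (Carlson gives $P < (2A+G)/3$ and $P > (2G+A)/3$ is one of the standard bounds). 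The genuinely new content is the leftmost inequality $X > A(P+G)/(3P-G)$. Using $X/A = e^{x\cot x - 1}$, $P/A = \sin(x)/x$, and $G/A = \cos x$ from Lemma~\ref{lemma1}, this becomes a purely trigonometric inequality on $(0,\pi/2)$; I would aim to reduce it to $x\cot x - 1 > \log\bigl((P+G)/(3P-G)\bigr)$ and then either bound the right side by a known Cusa--Huygens-type estimate or set up an auxiliary monotone function and apply Lemma~\ref{lem0} and Lemma~\ref{lembk} as in the proofs of Lemmas~\ref{lemma2} and~\ref{lemma3}.

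For set (2), $P^2/A > X > (P+G)/2$, the left inequality is precisely the middle inequality of \eqref{2402ee} in Theorem~\ref{2602thm1}, namely $X < P^2/A$, which is already proved (it is $P^2 > AX$ from \cite{sandortwosharp}), so that half is immediate by citation. The right inequality $X > (P+G)/2$ is the new one. Here I would again pass to the trigonometric form: it becomes $e^{x\cot x - 1} > \tfrac12\bigl(\sin(x)/x + \cos x\bigr)$ on $(0,\pi/2)$, equivalently $x\cot x - 1 > \log\bigl(\tfrac12(\sin(x)/x + \cos x)\bigr)$. I expect the cleanest route is to define the difference of the two sides, show it vanishes at $0$, and prove it is monotone by the ratio-of-derivatives machinery (Lemma~\ref{lem0}) combined with the Bernoulli-number power-series expansions \eqref{xcot}--\eqref{cosec} already recorded, exactly in the style used for Lemma~\ref{lemma5}.

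The main obstacle I anticipate is the leftmost inequality of set~(1), $X > A(P+G)/(3P-G)$, because the factor $3P-G$ in the denominator makes the trigonometric reduction messier than the single-exponential comparisons handled in Lemmas~\ref{lemma2} and~\ref{lemma3}; verifying that the associated coefficient ratio is monotone, so that Lemma~\ref{lembk} applies, is where the real work lies. The remaining pieces are either direct citations of earlier theorems or reduce, after clearing denominators, to elementary polynomial inequalities of the type $(A-G)^2 > 0$ or $(t-1)^4(t^2+4t+1) > 0$ that recur throughout the preliminary lemmas.
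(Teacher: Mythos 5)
Your proposal leaves the two genuinely new inequalities as unexecuted plans, and that is where it falls short. For set (1), your reduction of the middle inequality is sound: cross-multiplying $A(P+G)/(3P-G) > (2G+A)/3$ (legitimate since $3P-G>0$) gives $3A(P+G)-(2G+A)(3P-G) = 2G(2A+G-3P)$, so it is exactly equivalent to $P<(2A+G)/3$ --- which, note, is S\'andor's inequality \cite{sandor1405}, not Carlson's (Carlson's is $L<(2G+A)/3$). But the leftmost inequality $X > A(P+G)/(3P-G)$, which you correctly identify as ``where the real work lies,'' is never actually proved: you only sketch a hoped-for application of Lemma~\ref{lembk} and Lemma~\ref{lem0} without exhibiting the auxiliary function, its series coefficients, or the monotonicity of their ratio. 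The paper does not prove it either, but for a different reason: it simply cites it (together with the middle inequality) as \cite[Theorem 2.5 and Remark 2.3]{sandornew}, so no analytic work is required there. A blind proof that neither proves nor cites this step has a genuine gap.

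The second gap is strategic: for the right inequality of set (2), $X>(P+G)/2$, you propose an independent trigonometric monotonicity argument (again unexecuted), missing the observation that makes it immediate. Once $X > A(P+G)/(3P-G)$ is available from (1), one only needs $A/(3P-G) > 1/2$, which is equivalent to $2A > 3P-G$, i.e.\ again to $P<(2A+G)/3$; then
$$X \;>\; (P+G)\cdot\frac{A}{3P-G} \;>\; \frac{P+G}{2}.$$
This is exactly the paper's proof, and it costs two lines given (1), whereas your route would require building and analyzing a new power-series quotient in the style of Lemma~\ref{lemma5}. Your handling of the remaining pieces ($P^2/A>X$ via \cite{sandortwosharp}, and $(2G+A)/3>L$ via Carlson/\eqref{2602a0}) matches the paper and is fine.
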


\begin{proof} The first two inequalities  of (1) appear in 
\cite[Theorem 2.5 and Remark 2.3]{sandornew}. The second  inequality of  (2) follows by the first inequality of (1) and the remark that $A/(3P-G)>1/2$ , since this is $P< (2A+G)/3$; while the first one is $P^2>AX$ (\cite{sandortwosharp}).
\end{proof}

\begin{remark}\rm
Since it is known that $P> (2/\pi)A$
(due to Seiffert, see \cite{sandor1405}).
By $X> (P+G)/2$ we get the inequality
$X> [(2/\pi)A+G]/2$, 
which is not comparable with $(A+G)/e<X$.
\end{remark}

\noindent{\bf Proof of Theorem \ref{2602-thm}.} The first inequality of \eqref{2602-9} follows, since the function $f(t)=M_t$ is known to be strictly increasing. The second inequality follows by \eqref{2602a0}, while the third one can be found in Theorem \ref{sandorbhayo-theorem}.

It is known  that $H_p$ is an increasing function of $p$.
Therefore, the proof of \eqref{2602-10} follows by \eqref{2602b0}. 
$\hfill\square$

%

\begin{corollary} For $a,b>0$ with $a\neq b$, we have
\begin{equation}\label{2202a}
\frac{I}{L}<\frac{L}{G}<1+\frac{G}{H}-\frac{I}{G}.
\end{equation}
\end{corollary}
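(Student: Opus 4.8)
The plan is to clear the common factor $G$ throughout and reduce both halves of \eqref{2202a} to inequalities already recorded in the excerpt. The key observation is the classical identity $G^2=AH$: since $G^2=ab$ and $H=2ab/(a+b)$, we have $G^2/H=(a+b)/2=A$, so that $G/H=A/G$. This collapses the awkward right-hand member into
$$1+\frac{G}{H}-\frac{I}{G}=1+\frac{A-I}{G}.$$

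First I would dispose of the left inequality $I/L<L/G$. Multiplying through by $LG>0$ turns it into the equivalent form $IG<L^2$, i.e. $L>\sqrt{IG}$, which is exactly \eqref{ineq6} (Alzer's inequality) established in the remark following Lemma \ref{lemma1}. Hence the left inequality holds with no further work.

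Next, for the right inequality $L/G<1+(A-I)/G$, I would multiply by $G$ to obtain the equivalent statement $L<G+A-I$, that is $L+I<A+G$. This is precisely the left-hand side of Alzer's double inequality \eqref{2602alzer}, so it holds as well, and the chain \eqref{2202a} is complete.

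The main obstacle — in fact the only non-mechanical step — is recognizing that the term $G/H$ simplifies to $A/G$ via $G^2=AH$; once this is exploited, both inequalities are mere restatements of the known bounds $L^2>IG$ and $L+I<A+G$. A more laborious alternative would substitute the hyperbolic parameterization of Lemma \ref{lemma1}, namely $L/G=\sinh(y)/y$, $G/H=\cosh(y)$, and $I/G=e^{y\coth(y)-1}$, and then prove the resulting transcendental inequality $\sinh(y)/y+e^{y\coth(y)-1}<1+\cosh(y)$ directly for $y>0$; the algebraic reduction above sidesteps this entirely.
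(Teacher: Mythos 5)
Your proof is correct. For the left inequality you and the paper do the same thing: $I/L<L/G$ is just $L^2>IG$, which is Alzer's inequality \eqref{ineq6}, recalled in the remark after Lemma \ref{lemma1}. For the right inequality your route differs from the paper's actual proof, though not from its subsequent remark. You use $G^2=AH$ to turn $1+G/H-I/G$ into $1+(A-I)/G$, so the claim becomes $L+I<A+G$, the left side of the already-quoted \eqref{2602alzer}. The paper instead attempts a self-contained monotonicity proof in the hyperbolic variables of Lemma \ref{lemma1}: it asserts that $x\mapsto\bigl(1-e^{x/\tanh(x)-1}\bigr)/\bigl(1-\cosh(x)\bigr)$ is strictly decreasing from $(0,\infty)$ onto $(0,1)$, ``analogously to Lemma \ref{lemma2}'', and only in the remark following the proof does it record, exactly as you do, that by $H=G^2/A$ the right side of \eqref{2202a} is Alzer's $L+I<G+A$. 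Your reduction buys brevity and reliability: it needs one algebraic identity plus two results the paper already cites. The paper's route would buy an independent new proof of Alzer's inequality, but as printed it cannot work: since $e^{x/\tanh(x)-1}=I/G$ and $\cosh(x)=G/H$, the displayed function equals $(I/G-1)/(G/H-1)$, which involves no $L$ at all, and its limits at $0$ and $\infty$ are $2/3$ and $2/e$, so it is certainly not decreasing onto $(0,1)$; the intended quotient presumably has $\sinh(x)/x$ in the denominator (for instance $(I-G)/(A-L)$, whose limits are $1$ and $2/e$). So your argument is not merely a valid alternative; it sidesteps a genuine defect in the printed proof.
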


\begin{proof} The first inequality is due to Alzer \cite{alzer2}, while the second inequality follows from the fact that the function
$$x\mapsto \frac{1-e^{x/\tanh(x)-1}}{1-\cosh(x)}:(0,\infty)\to (0,1)$$
is strictly decreasing. The proof of the monotonicity of the function is the analogue to the proof of Lemma \ref{lemma2}.
\end{proof}

The right side of \eqref{2202a} may be written as 
$L+I< G+A$ (by $H=G^2/A$), and this is due to Alzer (see \cite{alzer1, sandorc} for history of early results).

\bigskip

\noindent{\bf Proof of Theorem \ref{thm30}.} The proof follows easily from Lemma \ref{lemma5}.
$\hfill\square$

\bigskip

In \cite{Seif2}, Seiffert proved that
\begin{equation}\label{seifineq}
\frac{2}{\pi}A<P,
\end{equation}
for all $a,b>0$ with $a\neq 0$.
As a counterpart of the above result we give the following inequalities.

\begin{corollary}\label{coro89} For $a,b>0$ with $a\neq b$, the following inequalities 
$$\frac{1}{e}A<\frac{\pi}{2e} P <X<P$$
holds true.
\end{corollary}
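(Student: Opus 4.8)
The plan is to split the chain $\tfrac1e A<\tfrac{\pi}{2e}P<X<P$ into its three constituent inequalities and dispatch each separately, the only genuinely new ingredient being the middle estimate $\tfrac{\pi}{2e}P<X$.

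The outer-left inequality $\tfrac1e A<\tfrac{\pi}{2e}P$ is merely a rescaling of Seiffert's bound \eqref{seifineq}: multiplying $\tfrac{2}{\pi}A<P$ through by the positive constant $\tfrac{\pi}{2e}$ gives exactly $\tfrac1e A<\tfrac{\pi}{2e}P$. The outer-right inequality $X<P$ is already recorded in Theorem \ref{sandortheorem}(1), so it may simply be cited.

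For the middle inequality I would pass to the trigonometric substitution of Lemma \ref{lemma1}. Writing $x\in(0,\pi/2)$ for the angle attached to $a>b$, we have $P/A=\sin(x)/x$ and $X/A=e^{x\cot(x)-1}$, so that the ratio of interest is
\[
\frac{X}{P}=\frac{x\,e^{x\cot(x)-1}}{\sin(x)}=:\phi(x).
\]
The claim $\tfrac{\pi}{2e}P<X$ is then equivalent to $\phi(x)>\tfrac{\pi}{2e}$ on $(0,\pi/2)$, which I would establish by monotonicity. Taking logarithms and differentiating,
\[
(\log\phi)'(x)=\frac1x+\cot(x)-x\csc^2(x)-\cot(x)=\frac1x-\frac{x}{\sin^2(x)}=\frac{\sin^2(x)-x^2}{x\sin^2(x)},
\]
which is strictly negative on $(0,\pi/2)$ because $0<\sin(x)<x$ there. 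Hence $\phi$ is strictly decreasing. A short computation of the endpoint limits gives $\phi(0^+)=1$ (from $x/\sin(x)\to1$ and $x\cot(x)\to1$) and $\phi(\pi/2^-)=\tfrac{\pi}{2e}$ (from $\cot(\pi/2)=0$ and $\sin(\pi/2)=1$). Therefore $\phi(x)>\tfrac{\pi}{2e}$ strictly for every $x\in(0,\pi/2)$, which is the desired inequality.

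I expect no serious obstacle here; the one point requiring a little care is recognizing that $X/P$ collapses to the elementary ratio $\phi$, after which the argument avoids the heavier power-series machinery (Lemmas \ref{lembk} and \ref{lem0}) used elsewhere in the paper and reduces to a one-line sign analysis via $\sin(x)<x$. It is worth remarking, moreover, that the same monotone function $\phi$, whose range is $(\tfrac{\pi}{2e},1)$, simultaneously reproves the upper bound $X<P$, so the middle and right inequalities could in fact be handled in a single stroke rather than by appealing to Theorem \ref{sandortheorem}.
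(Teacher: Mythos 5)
Your proposal is correct and follows essentially the same route as the paper: the left inequality is obtained by rescaling Seiffert's bound \eqref{seifineq}, and the key middle estimate comes from showing that the ratio $X/P = x e^{x\cot(x)-1}/\sin(x)$ is strictly decreasing on $(0,\pi/2)$ with limits $1$ and $\pi/(2e)$, which is exactly the paper's function $f_5$ and its sign analysis via $\sin(x)<x$. Your closing remark that this single monotone function also yields $X<P$ is precisely how the paper concludes, so even that observation coincides with the published argument.
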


\begin{proof} The first inequality follows from \eqref{seifineq}. For the proof of the second inequality we write by Lemma \ref{lemma1}
$$f'_5(x)=\frac{X}{P}=\frac{xe^{x/\tan(x)-1}}{\sin(x)}=f_5(x)$$
for $x\in(0,\pi/2)$. Differentiation gives
$$\frac{e^{x/tan(x)-1}}{\sin(x)}\left(1-\frac{x^2}{\sin(x)^2}\right)<0.$$ Hence the function $f_5$ is strictly decreasing in $x$, with 
$$\lim_{x\to 0}f_5(x)=1 \quad {\rm and}\quad \lim_{x\to \pi/2}f_5(x)=\pi/(2e)\approx 
0.5779.$$ This implies the proof.
\end{proof}

We finish this paper by giving the following open problem and a conjecture.

\bigskip

\noindent {\bf Open problem.} {\it 
What are the best positive  constants $a$ and $b$, such that}
$$M_a< (P+X)/2< M_b.$$

\bigskip

\noindent {\bf Conjecture.} {\it For $a\neq b$, one has}
$$PX > IL.$$

%


\vspace{.5cm}

\end{document}